\newtheorem{quest}{Question}
\newtheorem{thm}{Theorem}[section]
\newtheorem{cor}[thm]{Corollary}
\newtheorem{prop}[thm]{Proposition}
\theoremstyle{definition}
\newtheorem{defn}[thm]{Definition}
\theoremstyle{remark}
\newtheorem{rem}[thm]{Remark}
\numberwithin{equation}{section}
\def\1{\mathbbm{1}}
\renewcommand{\qed}{\unskip\nobreak\quad\qedsymbol}
\newcommand{\R}{{\mathbb{R}}}
\newcommand{\T}{{\mathbb{T}}}
\newcommand{\Z}{{\mathbb{Z}}}
\newcommand{\N}{{\mathbb{N}}}
\newcommand{\de}{{\mbox{ d}}}
\title{Pointwise Convergence of Sequences of Singular Measures}
\author{Andrew Parrish}
\address{Department of Mathematics and Computer Science\\
Eastern Illinois State University \\
600 Lincoln Avenue\\
Charleston, IL 61920-3099}
\email{ajparrish@eiu.edu}
\author{Joseph Rosenblatt}
\address{Department of Mathematics\\
University of Illinois at Urbana-Champaign\\
1409 W. Green Street\\
Urbana, IL 61801}
\email{rosnbltt@illinois.edu}
\date{\today}
\begin{document}
\maketitle

\begin{abstract} We investigate the almost everywhere convergence of sequences of convolution operators given by probability measures $\mu_n$ on $\mathbb R$.  If this sequence of operators constitutes an approximate identity on  a particular class of functions $\mathcal F$, under what additional conditions do we have $\mu_n\ast f \to f$ a.e. for all $f\in \mathcal F$?  We focus on the particular case of a sequence of contractions $C_{t_n}\mu$ of a single probability measure $\mu$, with $t_n\to 0$, so that that the sequence of operators is an approximate identity .  
\end{abstract}

\section{Introduction} \label{intro}
Suppose $\mu_n$ is a sequence of probability measures on $\mathbb R$.  Assume the sequence is an {\em approximate identity} on $L^1(\mathbb R,\lambda)$ where $\lambda$ is the usual Lebesgue measure on $\mathbb R$.  
This means $\| \mu_n\ast f -f\|_1 \to 0$ as $n\to \infty$ for all $f\in L^1(\mathbb R)$.   Then the question is, under what conditions can one expect the limit
$\lim\limits_{n\to \infty} \mu_n\ast f$ to exist almost everywhere?
These conditions can relate to the behavior of the function (e.g. whether it lies in a particular function space) and/or the nature of the measures $\mu_n$.

For example, the classical Lebesgue differentiation theorem says that if the measures are $\mu_n = \frac 1{\epsilon_n}1_{[0,\epsilon_n]} \lambda$, with $\epsilon_n\to 0$ as $n\to \infty$, then we have a.e. convergence for all $f\in L^1(\mathbb R)$.  However, the situation is dramatically different if $(\mu_n)$ is a sequence of discrete measures with supports converging to $0$.  Then a.e. convergence fails for the generic function $f$.  Such a result was shown for point mass measures $\mu_n$ in Bellow~\cite{Bellow}.  Then Bourgain ~\cite{Bourgain} proved a negative result like this for any averages of such a sequence, for example for  $\mu = \frac 1n \sum\limits_{k=1}^n \delta_{x_n}$ when $x_n\to 0$ as $n\to \infty$.  This negative case was extended by Karagulyan ~\cite{Karagulyan} to any sequence of discrete measures with supports shrinking to the origin.   

In contrast to this, in a type of extension of the Lebesgue differentiation theorem, if the measures are absolutely continuous with respect to Lebesgue measure, then there is always a subsequence for which one gets a.e. convergence.  That best result in this direction is in Kostyukovsky and Olevskii~\cite{KO}.  See also Rosenblatt ~\cite{Rosenblatt}. 

Yet even in the discrete case, interesting questions remain when one considers the conditions (particularly on the behavior of the functions) when one can expect convergence. For example, see Parrish and Rosenblatt~\cite{PR}, where the authors consider the main types of question that can be asked when the operators are given by a sequence of measures that are single point masses.

In this article, the focus shifts to when the measures are singular to Lebesgue measure and are {\em continuous}, that is, have no point masses.  We state some results for general sequences, but we mostly we focus on the case when $\mu_n =C_{t_n}\mu$ are contractions of a fixed measure $\mu$ that give an approximate identity.  

First, as in Kostyukovsky and Olevskii \cite{KO}, if $\mu_n$ are {\em Rajchman probability measures}, we observe that there is always a subsequence $\mu_{n_k}$ such that at least $\mu_{n_k}\ast f\to f$ a.e. with respect to Lebesgue measure for all $f\in L^2(\mathbb R)$.  

But in addition, using Riesz products, we can show that there are continuous, singular probability measures $\mu$ that are not Rajchman measures, and some sequence $t_n\to 0$ as $n\to \infty$, such that $C_{t_n}\mu\ast f \to f$ a.e. for all $f\in L^2(\mathbb R)$.  This answers a question raised in Kostyukovsky and Olevskii~\cite{KO}.   

However, we can also show that there are continuous, singular measures and a particular $t_n\to 0$, so that for any subsequence $t_{n_m}$, there are functions in $f\in L^2(\mathbb R)$, indeed even characteristic functions, such that almost everywhere $C_{t_{n_m}}\mu\ast f$ fails to converge.  This type of example was suggested by a method used in Kostyukovsky and Olevskii~\cite{KO}.

\begin{rem}  We are considering the convolution $\nu \ast f$ for a regular finite Borel measure $\nu$ on $\mathbb R$ and a function $f\in L^p(\mathbb R,\beta,\lambda)$ where $\lambda$ is the Lebesgue measure on the Lebesgue measurable sets $\beta$.  Because our functions $f$ are actually equivalence classes, and because a representative of this class might not be a Borel function, there is some issue with what is meant by the convolution $\nu \ast f$.  This is a traditional topic in harmonic analysis that is addressed by a number of authors.  For completeness, we have included here in Appendix~\ref{details} a discussion of the definition of this convolution with appropriate references.  
\qed
\end{rem}

\section{Rajchman Measures and Good Approximate Identities}\label{rajc}

We generally approach the question of pointwise convergence of a sequence of convolutions via comparison against a sequence that is known to converge, specifically a sequence of Lebesgue derivatives. The difference between the two will be assessed using a square function; this, in turn, will be dealt with via Fourier analysis. 

To illustrate the methods, we first consider a relatively well-behaved family of measures.

\begin{defn} A {\em Rajchman measure} is a Borel probability measure $\mu$ such that $\widehat {\mu}(n) \to 0$ as $|n|\to \infty$.
\end{defn}

Rajchman measures have an interesting history: an excellent introduction is provided by the paper \cite{LyonsSurvey}.  The characterization of these measures was resolved by Lyons in the articles \cite{Lyons1} and \cite{Lyons2}.  While this characterization is very interesting, especially from a descriptive set theory viewpoint, it does not appear to be useful for our purposes.

By the Riemann-Lebesgue Lemma, any Borel measure that is absolutely continuous with respect to Lebesgue measure $\lambda$ is a Rajchman measure.  But it is a classical fact that there are also Rajchman measures that are singular with respect to Lebesgue measure (see \cite{Menshov} for the earliest example).

Given a sequence $(\epsilon_n)$ with $\epsilon_n> 0 $ and $\lim \epsilon_n =0$, we may consider the Lebesgue differentiation operator corresponding to this sequence as a convolution against the measures $\lambda_n$, where 
\begin{equation*}
	\lambda_n(A) = \frac{1}{\epsilon_n} \int_A \1_{[0, \epsilon_n]} \de \lambda. 
\end{equation*}

More explicitly, we have
\begin{equation*}
	D_{\epsilon_n}f(x) = \lambda_n \ast f(x) = \frac{1}{\epsilon_n} \int_\T f(x-t) \1_{[0, \epsilon_n]}(t) \de \lambda(t).
\end{equation*}

Drawing on the example provided by the differentiation operator, we say that a sequence of probability measures $(\mu_n)$ on $[0,1]$ has \emph{supports shrinking to zero} if there is a sequence $\epsilon_n >0$ tending to zero such that 
\begin{equation*}
	\mbox{supp}(\mu_n) \subseteq [0, \epsilon_n].
\end{equation*}
It is easy to show that if $(\mu_n)$ has supports shrinking to zero, then it is an approximate identity for $L^1(\mathbb R)$.

\begin{prop}\label{squares}
	Let $(\mu_n)$ be a sequence of Rajchman measures with supports shrinking to zero. Then there exists a sequence $(\epsilon_k)$ and a subsequence $(n_k)$ such that the square function
	\begin{equation*}
		Sf = \left\{ \sum_{k \in \N} | \lambda_k \ast f(x) - \mu_{n_k} \ast f(x) |^2 \right\}^\frac12 
	\end{equation*}
	has a strong $L^2$ bound:
	\begin{equation*}
		\|Sf\|_2 \le C\|f\|_2  \mbox{ for all } f\in L^2[0,1].
	\end{equation*}
\end{prop}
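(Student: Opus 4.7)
The plan is to reduce everything to a pointwise Fourier estimate via Plancherel. Expanding $\|Sf\|_2^2 = \sum_k \|\lambda_k \ast f - \mu_{n_k} \ast f\|_2^2$ and applying Parseval, it suffices to show that the multipliers $m_k(n) := \widehat{\lambda_k}(n) - \widehat{\mu_{n_k}}(n)$ satisfy $\sup_{n\in\Z} \sum_k |m_k(n)|^2 \le C$. The basic ingredients are: (i) the Taylor bound $|\widehat{\lambda_k}(n) - 1| \le 2\pi \epsilon_k |n|$ together with the decay bound $|\widehat{\lambda_k}(n)| \le 2/(\epsilon_k |n|)$; (ii) the analogous Taylor bound $|\widehat{\mu_{n_k}}(n) - 1| \le 2\pi \delta_{n_k} |n|$ whenever $\mathrm{supp}(\mu_{n_k}) \subseteq [0, \delta_{n_k}]$; and (iii) the Rajchman property, which for each fixed $k$ gives $|\widehat{\mu_{n_k}}(n)| \to 0$ as $|n| \to \infty$ at some rate depending on $k$.

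The only place where $|m_k(n)|$ is forced to be of order one is the transition window $|n| \sim 1/\epsilon_k$, so the strategy is to choose the parameters so that each frequency $n$ lies in at most one such window. I would construct $(\epsilon_k)$, $(n_k)$, and an auxiliary rapidly increasing sequence $M_k \uparrow \infty$ inductively as follows. Fix a small $\eta > 0$, set $M_1 = 1$, and at stage $k$: (a) pick $\epsilon_k \le \eta 2^{-k}/M_k$ so that $|\widehat{\lambda_k}(n) - 1| \le \eta 2^{-k}$ for $|n| \le M_k$; (b) choose $n_k > n_{k-1}$ with $\delta_{n_k} \le \eta 2^{-k}/M_k$, which is possible since supports shrink to zero; (c) use the Rajchman property of the single measure $\mu_{n_k}$ to produce $R_k$ with $|\widehat{\mu_{n_k}}(n)| \le \eta 2^{-k}$ for $|n| > R_k$; (d) define $M_{k+1} := \max(R_k,\, 2^{k+1}/(\eta \epsilon_k),\, 2 M_k)$, which guarantees $|\widehat{\lambda_k}(n)| \le \eta 2^{-k}$ once $|n| \ge M_{k+1}$.

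For each $n$ there is then a unique $k^* = k^*(n)$ with $M_{k^*} \le |n| < M_{k^*+1}$. The single term $k = k^*$ contributes at most $|m_{k^*}(n)|^2 \le 4$; for $k > k^*$ we have $|n| < M_k$, both Taylor bounds activate, and $|m_k(n)| \le 2\eta 2^{-k}$; for $k < k^*$ we have $|n| \ge M_{k+1}$, both decay bounds activate, and again $|m_k(n)| \le 2\eta 2^{-k}$. Summing gives $\sum_k |m_k(n)|^2 \le 4 + 4\eta^2 \sum_k 4^{-k}$, uniform in $n$, which by Plancherel yields the desired $L^2$ estimate. The main obstacle I anticipate is precisely this transition region: the Taylor and decay bounds for $\widehat{\lambda_k}$ collide near $|n| \sim 1/\epsilon_k$, so no single cutoff $M_k$ can isolate it. Introducing the two-scale cutoff $(M_k, M_{k+1})$ with a wide gap (forced by step (d)), with the upper half of that gap handled by Rajchman decay of $\mu_{n_k}$, is what makes the summation converge uniformly.
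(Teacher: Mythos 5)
Your proposal is correct and follows essentially the same strategy as the paper's proof: reduce via Plancherel to a uniform-in-frequency bound on $\sum_k |\widehat{\lambda_k}(n)-\widehat{\mu_{n_k}}(n)|^2$, then choose the parameters inductively so that each frequency sees at most one transitional term of size $O(1)$, with all other terms made summably small by the Rajchman/decay bounds at high frequency and the Taylor (shrinking-support) bounds at low frequency. Your explicit two-sided cutoffs $M_k < M_{k+1}$ play exactly the role of the threshold $S$ chosen at each stage of the paper's induction, so the only difference is that your write-up is more quantitative.
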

\begin{proof}
	Leaving aside for the moment the selection of $(\epsilon_k)$ and $(n_k)$, we apply the Plancherel theorem:
	\begin{align*}
		\| Sf(x)  \|_{L^2}^2 &= \int_{\T} \sum_{k \in \N} | (\lambda_k - \mu_{n_k}) \ast f(x) |^2  d\lambda\\
		&\leq  \sum_{k \in \N}  \int_{\T} | (\lambda_k - \mu_{n_k}) \ast f(x) |^2  d\lambda \\
		&= \sum_{k \in \N}  \sum_{j \in \Z} | \widehat {\left (\lambda_k -  \mu_{n_k} \ast f\right )}(j) |^2 \\
		&= \sum_{k \in \N}  \sum_{j \in \Z} | \widehat{(\lambda_k - \mu_{n_k})}(j) |^2 |\widehat{f}(j)|^2\\
		&\leq   \sum_{j \in \Z} |\widehat{f}(j)|^2 \sum_{k \in \N} | \widehat{(\lambda_k - \mu_{n_k})}(j) |^2\\
		&=\sum_{j \in \Z} |\widehat{f}(j)|^2 \sum_{k \in \N} | \widehat{\lambda_k}(j) - \widehat{\mu_{n_k}}(j) |^2. 
	\end{align*}

The task is now to show that we can arrange for the the sum over $k$ to be bounded uniformly in $j$.   First, for any $\lambda_1$ and $\mu_{n_1}$, $|\widehat {\lambda_1}(s) - \widehat {\mu_{n_1}}(s)|\le 4$
for all $s\in \mathbb R$.  Now take as an induction hypothesis: for $K\ge 1$, we have taken a choice of $\lambda_k$ and $\mu_{n_k}$ such that there is a constant $C_K < \sqrt 5$ for which for all $s\in \mathbb R$
\[\Sigma_K(s) = \sum\limits_{k=1}^K |\widehat {\lambda_k}(s) - \widehat {\mu_{n_k}}(s)|^2 \le C_K^2.\] 
Because the measures are all Rajchman measures, if $\epsilon > 0$, there is a constant $S$ such that for all $|s|\ge S$,
\[\sum\limits_{k=1}^K |\widehat {\lambda_k}(s) - \widehat {\mu_{n_k}}(s)|^2 \le \epsilon.\] 
 Now choose $\lambda_{K+1}$ and $\mu_{n_{K+1}}$ such that for all $s, |s|\le S$,
\[|\widehat {\lambda_{K+1}}(s) - \widehat {\mu_{n_{K+1}}}(s)|^2 \le \epsilon.\] 
So there is a choice of $\epsilon$ small enough so that there is a constant $C_{K+1} <\sqrt 5$ such that for all $|s| \le S$,
\[\Sigma_{K+1}(s) = \sum\limits_{k=1}^{K+1} |\widehat {\lambda_k}(s) - \widehat {\mu_{n_k,}}(s)|^2 \le C_K^2 + \epsilon\le C_{K+1}^2,\]
and for all $|s|\ge S$,
\[\Sigma_{K+1}(s) = \sum\limits_{k=1}^{K+1} |\widehat {\lambda_k}(s) - \widehat {\mu_{n_k,}}(s)|^2 \le \epsilon + 4\le C_{K+1}^2\]
This proves the induction step.
Hence, there are sequences $\lambda_k$ and $\mu_{n_k}$ such that for all $s$,
\[\sum\limits_{k=1}^\infty |\widehat {\lambda_k}(s) - \widehat {\mu_{n_k}}(s)|^2 \le 5.\]
\end{proof}

\begin{rem}\label{VARIANT}  It is useful to observe the following abstract version of the square function bound above.  Suppose we have Lebesgue measurable functions $\phi_n:\mathbb R\to \mathbb C$ are vanishing at infinity.  Suppose  we have $\phi_n$ being  {\em  locally asymptotic to $1$ at zero}; that is, for all $\epsilon$ and $M$, there exists $K$ such that if $k\ge K$, then $|\phi_k(s) - 1|\le \epsilon$ for all $|s|\ge M$.  Then there is subsequence $\phi{n_k}$ such that the same square function bound obtained above holds. That is, for some constant $C$, for any $s\in \mathbb R$, we have
$\sum\limits_{k=1}^\infty |\lambda_k(s) - \phi_{n_k}(s)|^2 \le C^2$.
\qed
\end{rem} 

Applying the Lebesgue Convergence Theorem, we have the following result due to Kostyukovsky and Olevskii~\cite{KO}as a corollary.

\begin{cor}\label{result1}  Suppose $(\mu_n)$ is a sequence of Rajchman measures such that for any $f\in L^2(\mathbb R)$,  
\begin{equation*}
	\lim\limits_{k\to \infty} \mu_{n_k}\ast f (x) = f(x) \mbox{ a.e.} 
\end{equation*}
\end{cor}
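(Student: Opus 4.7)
The plan is to combine Proposition \ref{squares} with the classical Lebesgue differentiation theorem, treating the proposition's square function bound as the essential work and using the corollary as a packaging step. First, I would apply Proposition \ref{squares} to obtain a sequence $(\epsilon_k)$ with $\epsilon_k \to 0$ (defining the Lebesgue averaging measures $\lambda_k$) together with a subsequence $(n_k)$ such that the square function
\[
Sf(x) = \left\{\sum_{k\in\N}|\lambda_k \ast f(x) - \mu_{n_k}\ast f(x)|^2\right\}^{1/2}
\]
satisfies $\|Sf\|_2 \le C\|f\|_2$ for every $f\in L^2$.

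The key observation is then that $\|Sf\|_2 < \infty$ forces $Sf(x) < \infty$ a.e., so for a.e.\ $x$ the numerical series $\sum_k |\lambda_k\ast f(x) - \mu_{n_k}\ast f(x)|^2$ converges; in particular, its individual terms must vanish, giving $\lambda_k\ast f(x) - \mu_{n_k}\ast f(x) \to 0$ a.e. Meanwhile, because $\epsilon_k\to 0$ the Lebesgue differentiation theorem yields $\lambda_k\ast f(x) \to f(x)$ a.e. A single application of the triangle inequality then produces $\mu_{n_k}\ast f(x) \to f(x)$ a.e., which is the desired conclusion.

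In this scheme there is essentially no obstacle left: the analytic content has been spent in Proposition \ref{squares}, where Rajchman decay was used to diagonalize the Fourier tails and hold the square function uniformly bounded. The only minor point to mind is that the author's statement does not mention that the $\mu_n$ have supports shrinking to zero, which is the hypothesis of Proposition \ref{squares} and is needed to invoke it; I would either state this assumption explicitly or, if $(\mu_n)$ is merely an $L^1$ approximate identity, truncate each $\mu_n$ to a small neighborhood of the origin, noting that such a truncation is a Rajchman perturbation of arbitrarily small total variation and so does not disturb either the square-function step or the pointwise limit.
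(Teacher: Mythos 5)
Your proposal is correct and follows essentially the same route as the paper: the authors likewise deduce a.e.\ convergence from the square-function bound of Proposition~\ref{squares} together with the Lebesgue differentiation theorem, and they handle the missing shrinking-supports hypothesis exactly as you suggest, by replacing each $\mu_n$ with a truncated probability measure $\nu_n$ supported near the origin with $\sum_n \|\nu_n-\mu_n\|_1<\infty$ along a subsequence and then transferring the pointwise limit back. The only difference is one of emphasis: you spell out the ``finite square function forces terms to vanish'' step, which the paper leaves implicit.
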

\begin{proof} Because the measures are probability measures, it is not difficult to show that there exists a sequence $(\nu_n)$ of probability measures whose supports are shrinking to $0$ such that $\|\nu_n-\mu_n\|_1\to 0$ as $n\to \infty$.  By dropping to a subsequence we can even have $\sum\limits_{n=1}^\infty \|\nu_n-\mu_n\|_1 < \infty$.  Then applying Proposition~\ref{squares}, we have a subsequence $\nu_{n_k}$ such that
for all $f\in L^1(\mathbb R)$, $\lim\limits_{k\to \infty} \nu_{n_k}\ast f = f$ a.e.  It follows immediately that                            
for all $f\in L^1(\mathbb R)$, $\lim\limits_{k\to \infty} \mu_{n_k}\ast f = f$ a.e. 
\end{proof}

\subsection{Contractions of a measure and rates}

In particular, we can consider the case of contractions of a measure.  It is not necessary, but in any case we take our measures $\mu$ to be supported in $[0,1]$.   So while we could look at Fourier transforms just on $\mathbb Z$, in the end we have to consider also the Fourier transform on $\mathbb R$ too.

The easiest way to understand what a contraction does is by looking at the Fourier transform.  Take $t > 0$ and a finite regular Borel measure $\mu$ with support a subset of $[0,1]$.  Let $C_t\mu$ be its contraction by $t$.  Then we have the Fourier transform $\widehat {C_t\mu}(s) = \widehat {\mu} (ts) = \int_{\mathbb R} \exp (-its)\, d\mu(s)$.  Without looking at any other formulas, you can see that this is a positive-definite function and so must be the Fourier transform of a probability measure too.  

But of course we have other specific formulas.  For example, 
\medskip

A)	if $f\in C_c(\mathbb R)$, then $\int f\, dC_t{\mu} = \int_{-\infty}^\infty f(ts) \, d\mu(s)$.  
\medskip

\noindent Also, 
\medskip

B)	if $A\subset \mathbb R$ is a Borel set, then $C_t{\mu} (A) =\int 1_A(ts)\, d\mu(s)= \mu (A/t)$.  
\medskip

\noindent Either of these two formulas could be used as equivalent definitions of the contraction $C_t\mu$.
\medskip

We can see immediately that $C_t{\mu}$ has support a subset of $[0,t]$.  
\medskip

We consider the convolution $f\to f\ast C_t\mu$ where $f\ast C_t\mu (y) = \int_{\mathbb R} f(y-x)\, \, dC_t\mu (x) = \int_{\mathbb R} f(y – tx)\, d\mu(x)$.  

\begin{rem} For example, if $\mu = 1_{[0,1]}\lambda$ with $\lambda$ the usual Lebesgue measure on $\mathbb R$, then for $f\in C_c(\mathbb R)$, we have $f\ast C_t\mu (y) =\int_0^1 f(y – tx)\, d\lambda(x) = \frac 1t\int_0^t f(y – w)\, d\lambda(w)$.  Hence, if $t_n\to 0^+$, we have for $f\in L^1(\mathbb R,\lambda)$, $f\ast C_{t_n}\mu (y) = f\ast (\frac 1{t_n}1_{[0,t_n]}\lambda) (y) = \frac 1{t_n}\int_0^{t_n} f(y - x)\, d\lambda(x) \to f(y)$ for a.e. $y$ with respect to $\lambda$.
\end{rem}

With the definitions of contractions above, it is easy to see that we have the following consequence of Proposition~\ref{squares}.

\begin{cor}\label{result1contract} Suppose $\mu$ is a Rajchman measure on $[0,1]$.  Then there is a sequence $\epsilon_n\to 0^+$ as $n\to \infty$ such that  such that for any $f\in L^2(\mathbb R)$,  
\begin{equation*}
	\lim\limits_{n\to \infty} C_{\epsilon_n}\mu\ast f (x) = f(x) \mbox{ a.e.} 
\end{equation*}
\end{cor}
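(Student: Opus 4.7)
The plan is to reduce the statement to a direct application of Proposition~\ref{squares}. The first step I would take is to observe that for any $t > 0$, the contraction $C_t\mu$ is itself a Rajchman measure: its Fourier transform is $\widehat{C_t\mu}(s) = \widehat{\mu}(ts)$, which tends to $0$ as $|s| \to \infty$ whenever $\widehat{\mu}$ does. Also, since $\mu$ is supported in $[0,1]$, $C_t\mu$ is supported in $[0,t]$.

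Next, I would fix any auxiliary sequence $t_n \to 0^+$ (for concreteness, $t_n = 1/n$). Then $(C_{t_n}\mu)$ is a sequence of Rajchman probability measures whose supports shrink to $0$, exactly the hypothesis of Proposition~\ref{squares}. Applying that proposition produces a sequence $\delta_k \to 0^+$ and a subsequence $(n_k)$ such that, with $\lambda_k = \frac{1}{\delta_k}\1_{[0,\delta_k]}\lambda$, the square function
\[
Sf(x) = \left(\sum_{k \in \N} \bigl|\lambda_k \ast f(x) - C_{t_{n_k}}\mu \ast f(x)\bigr|^2\right)^{1/2}
\]
satisfies $\|Sf\|_2 \le C \|f\|_2$ for every $f \in L^2(\R)$.

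From the $L^2$ bound on $S$ it follows that $Sf(x) < \infty$ for a.e.\ $x$, so each term in the square function vanishes in the limit, giving $\lambda_k \ast f(x) - C_{t_{n_k}}\mu \ast f(x) \to 0$ a.e. The classical Lebesgue differentiation theorem, applied along $\delta_k \to 0$, yields $\lambda_k \ast f(x) \to f(x)$ a.e., and combining these two statements gives $C_{t_{n_k}}\mu \ast f(x) \to f(x)$ a.e. Setting $\epsilon_n := t_{n_n}$ (relabeling the subsequence) then produces the required sequence of contraction parameters.

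There is no substantive obstacle: once the Rajchman property of contractions is noted, the corollary follows mechanically from Proposition~\ref{squares} and Lebesgue differentiation. The only step that could conceivably go wrong is securing a single subsequence that works for \emph{all} $f \in L^2$, but this is automatic from the strong $L^2$ bound on the square function and requires no separate density or Banach principle argument.
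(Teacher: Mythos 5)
Your proof is correct and follows exactly the route the paper intends: the contractions $C_{t}\mu$ of a Rajchman measure are themselves Rajchman measures whose supports shrink to zero, so Proposition~\ref{squares} applies directly, and the square-function bound combined with Lebesgue differentiation along $(\delta_k)$ gives a.e.\ convergence along the chosen subsequence of contraction parameters. The paper offers no further detail (it states the corollary as an immediate consequence of Proposition~\ref{squares}), and your write-up supplies precisely the steps that "it is easy to see" elides.
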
 

The proof of Proposition~\ref{squares} and Corollary~\ref{result1contract} are not explicit.  But the details can be used to make the examples explicit.  In particular, we have the following type of result.

\begin{cor}\label{explicit}  Suppose $\mu$ is a probability measure supported on $[0,1]$ that is a Rajchman measure.  Suppose $h(t)$ is decreasing with $h(t) \to 0^+$ as $|t| \to \infty$
and that $|\widehat \mu(t)| \le h(t)$ for all $|t|\ge 1$.  Then if $(t_n)$ is decreasing with $t_{n+1}\le t_n/2^nT_n$ where $T_n \ge \max (2^n,h^{-1}(1/2^n))$, we have $C_{t_n}\mu\ast f\to f$ a.e. for all $f\in L^2(\mathbb R)$.
\end{cor}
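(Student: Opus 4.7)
The plan is to make the abstract construction in Proposition~\ref{squares} explicit by setting $\epsilon_n = t_n$, so that $\lambda_n = \tfrac{1}{t_n}\mathbf{1}_{[0,t_n]}\lambda$ is the associated Lebesgue average. The goal then reduces to verifying the uniform bound
\[
\sum_{n=1}^\infty |\widehat{\lambda_n}(s) - \widehat{\mu}(t_n s)|^2 \le C
\]
for all $s\in\mathbb R$. Given this, Plancherel on $\mathbb R$ (in place of the Fourier series argument of Proposition~\ref{squares}) yields a strong $L^2$ bound on the square function $Sf = \{\sum_n |\lambda_n\ast f - C_{t_n}\mu\ast f|^2\}^{1/2}$, hence $Sf<\infty$ a.e. Since $\lambda_n\ast f \to f$ a.e.\ by the Lebesgue differentiation theorem, this forces $C_{t_n}\mu\ast f \to f$ a.e., as claimed.

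To verify the uniform estimate I would fix $s>0$ (the cases $s=0$ and $s<0$ are trivial or symmetric) and split the indices by comparing $t_n s$ with the two critical values $1$ and $T_n$. Define $N_1(s) = \min\{n : t_n s < 1\}$ and $N_2(s) = \min\{n : t_n s < T_n\}$; since $T_n \ge 1$, clearly $N_2 \le N_1$. For $n < N_2$ we have $t_n s \ge T_n \ge 2^n$, so $|\widehat{\mu}(t_n s)| \le h(T_n) \le 2^{-n}$ and $|\widehat{\lambda_n}(s)| \le 2/(t_n s) \le 2^{1-n}$; this tail sums geometrically. For $n \ge N_1$, since $\mu$ and $\lambda_n$ are supported in $[0,1]$ and $[0,t_n]$ respectively, the elementary bound $|e^{-iux}-1|\le |ux|$ yields $|\widehat{\mu}(t_n s)-1|\le t_n s$ and $|\widehat{\lambda_n}(s)-1|\le t_n s$. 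Combined with the super-geometric decay $t_{n+1}\le t_n/(2^n T_n)$ and the inequality $(t_{N_1} s)^2 < 1$, the sum $\sum_{n\ge N_1} 4(t_n s)^2$ is bounded by an absolute constant.

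The main obstacle, and the reason for the particular form of the hypothesis $T_n \ge 2^n$, is the intermediate range $\{N_2,\dots,N_1-1\}$ where $1 \le t_n s < T_n$ and neither estimate is directly effective. The key observation is that this range contains at most one index: if $n \ge N_2$, then $t_{n+1} s \le t_n s/(2^n T_n) < 1/2^n \le 1$, forcing $n+1 \ge N_1$. That single bad term contributes at most $4$ to the total, and assembling the three ranges yields the required uniform bound.
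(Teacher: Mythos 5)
Your proposal is correct and follows essentially the same strategy as the paper's proof: bound the square-function multiplier $\sum_n |\widehat{\lambda}(t_ns)-\widehat{\mu}(t_ns)|^2$ by splitting the indices at a crossover depending on $s$, using the decay estimates $|\widehat{\lambda}(t_ns)|\le 2/|t_ns|$ and $|\widehat{\mu}(t_ns)|\le h(t_ns)\le h(T_n)\le 2^{-n}$ when $t_ns$ is large, and the Lipschitz-at-zero estimate $|\widehat{\nu}(u)-1|\le C|u|$ together with the super-geometric decay of $(t_n)$ when $t_ns<1$. Your explicit isolation of the at-most-one intermediate index $n=N_1-1$, bounded trivially by $4$, is in fact slightly more careful than the paper's induction, which applies the small-argument increment bound at the crossover index $K$ even though only $t\ge T_{K-1}/t_{K-1}$ is available there; the conclusion is unaffected since that single term contributes at most a constant.
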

\begin{proof}  We claim that these conditions make it so that $\Sigma (t) = \sum\limits_{n=1}^\infty |\widehat \lambda (t_nt) -\widehat {\mu}(t_nt)|^2$ is uniformly bounded for $t\in \mathbb R$.  
Then the method of proof of Proposition~\ref{squares} applies and gives the result.  

Let $\Sigma_K(t) = \sum\limits_{n=1}^K |\widehat \lambda (t_nt) -\widehat {\mu}(t_nt)|^2$.  
We have the bound for the Fourier transform of the Lebesgue measure $\lambda$: $|\widehat \lambda (t)| \le 2/|t|$.  
Now take $|t|\ge T_K/t_K$.  Then $|tt_k|\ge  |tt_K| \ge T_K$ for all $k, 1\le k \le K$.  So $1/|tt_k|^2 \le 1/T_K^2$ and because
$h$ is decreasing, $h(tt_k)^2 \le h(T_K)^2$ for all $k, 1 \le k\le K$.
So for a universal constant $C$, if $|t|\ge T_K/t_K$, we can bound $\Sigma_K (t) \le \sum\limits_{k=1}^K C/|tt_k|^2 + Ch(tt_k)^2\le
C\,K/T_K^2 + C\,K\,h(T_K)^2$.  So if we take $T_K = \max (2^K, h^{-1}(1/2^K))$, we would have
$\Sigma_K(t) \le C(1/4^K)$ when $|t|\ge T_K/t_K$

Also, we can bound   $ |\widehat \lambda (t_{K+1}t) -\widehat {\mu}(t_{K+1}t)|^2 \le C \sigma_K^2$ for $|t|\le T_K/t_K$ if we take $t_{K+1} \le \sigma_K/(T_K/t_K)$.  This is because for a probability measure $\nu$ on $[0,1]$, we have $|\widehat {\nu}(t) -1|
\le \max \{|\exp (i t x) - 1|: 0\le x\le 1\} \le C|t|$.    Hence,
$ |\widehat \lambda (t_{K+1}t) -\widehat {\mu}(t_{K+1}t)|^2 = |(\widehat \lambda (t_{K+1}t) -1) -(\widehat {\mu}(t_{K+1}t)-1)|^2
\le  C|t_{K+1}t|^2\le C|\sigma_K/(T_K/t_K)t|\le C \sigma_K^2$ when $|t|\le T_K/t_K$.
But then if we take $\sigma_K = 1/2^K$, for $|t|\le T_K/t_K$, we would get a bound 
 $\Sigma_{K+1}(t) = \sum\limits_{n=1}^{K+1} |\widehat \lambda (t_nt) -\widehat {\mu}(t_nt)|^2 \le \Sigma_K(t) + \sigma_K^2 \le \Sigma_K(t) + C(1/2^K)^2$ .  
\medskip

Now without loss of generality take $t\ge 0$ and choose $K$ such that $T_{K-1}/t_{K-1} \le t \le T_K/t_K$, where for simplicity we take $t_0 =1$ and $T_0 = 0$.  
Then take $L \ge K$ and consider what bound we can get on $\Sigma_L (t)$.  
We would then have $t \le T_L/t_L$ and so $\Sigma_L(t) \le \Sigma_{L-1}(t) + C/4^{L-1}$.  Again, we would have $t\le T_{L-1}/t_{L-1}$ since $L$ is large, and so 
$\Sigma_L(t) \le \Sigma_{L-2}(t) + C/4^{L-2} + C/4^{L-1}$.  Proceeding inductively, we get
$\Sigma_L(t) \le \Sigma_{K-1}(t)+C/4^{K-1} +\dots + C/4^{L-1}$.
But then because $t \ge T_{K-1}/t_{K-1}$, we would also have $\Sigma_{K-1}(t) \le C(K-1)/4^{K-1}$.  Hence, we have 
$\Sigma_L(t)\le C(K-1)/4^{K-1} +C/4^{K-1}+\dots + C/4^{L-1}$.   
Hence, for all $t$, $\Sigma(t) \le C \sum\limits_{k=1}^\infty k/4^k$.  
\end{proof}

\begin{rem}
It is an interesting issue to ask what rate $t_n$ will work in general.  For Lebesgue measure, anything going to zero will work.  So  any measure absolutely continuous with Lebesgue with a bounded density will work too (by bounding the maximal function).  Maybe there are examples that this is true even though the density is not bounded.  

But of course it leaves open this issue: can we get a singular measure (say Rajchman) such that for any $t_n\to 0$, we have $C{t_n}\mu\ast f\to f$ a.e. for all $f\in L^2$?  Perhaps this is too liberal and we should at least take $t_n\le 1/n$, or something like that.  Or, if $\mu$ is singular is this not ever possible, the $t_n$ always having to increase rather rapidly?!  

The construction in the attached is giving $t_n$ but the ratio $t_{n+1}/t_n$ is going to zero at least exponentially in order to get the estimates to work out.  
So it is reasonable to ask if there are singular measure $\mu$ so that contracting by a lacunary rate $1/2^n$ will give a pointwise good approximate identity?  
\qed
\end{rem}

\section{Riesz Product Measures}\label{productmeasures}

In this section, our goal is to construct a non-Rajchman probability measure on $[0,1]$ and a sequence $(t_n)$ with $\lim\limits_{n\to \infty} t_n = 0$, such that the contractions $C_{t_n}\mu$ have $C_{t_n}\mu\ast f \to f$ a.e. as $n\to \infty$ for all $f$ in some Lebesgue class.  This requires having some specific information about the behavior of the Fourier transform of the non-Rajchman measure $\mu$.  But while there is a large literature surrounding non-Rajchman singular measures, specifics about their Fourier transforms can often be difficult to obtain.

Perhaps fittingly, we rely on one of the earliest examples of a non-Rajchman measure, first introduced by Fr\'ed\'eric Riesz over 100 years ago \cite{Riesz}: the Riesz product measures. These measures, each the weak* limit of a sequence of products, offer a great deal of control over their Fourier transforms.  So this is the approach we take here. Though Zygmund~\cite{Zygmund} provides a classical view on the basics of Riesz products, there are many other good references.  For example, the text by Graham and McGehee~\cite{GM} gives an excellent introduction to the subject. 

A Riesz product is an infinite product of the form 
\begin{equation*}
	\prod_{m=1}^{\infty} \left( 1+ a_m \cos(2 \pi n_m x) \right),
\end{equation*}
where $|a_m| \le 1$, $a_m \neq 0$, for all $m$ and $(n_m)$ is taken to be sufficiently lacunary, for example, $n_{m+1}/n_m \ge \Delta \geq 3$. For the purposes of our construction, the coefficients $a_m$ can be chosen so that $a_m=1$ for all $m$. 

Note that the partial products are trigonometric polynomials. Consider the partial product
\begin{equation*}
	\prod_{m=1}^{k} \left( 1 + \cos(2\pi n_{m} x) \right).
\end{equation*}
If $\ell_k = \sum_{m=1}^k n_{m}$, we then have that 
\begin{equation*}
	p_k(x) = 1 + \sum_{j=1}^{\ell_k} c_j \cos(2\pi j x) = \prod_{m=1}^{k} \left( 1 + \cos(2\pi n_{m} x) \right),
\end{equation*}
with $c_j$ non-zero if, and only if, there is some $k$ such that  
\begin{equation}
	j= n_k + \sum_{i=0}^{k - 1} s_i n_i, \label{jay}
\end{equation}
where $s_i \in \{ -1, 0, 1\}$.

The Riesz product measure is then the weak* limit of the measures given by these polynomials: for each Borel set $A\subset [0,1]$, 
\begin{equation*}
	\mu(A) = \lim_{k \to \infty} \int_A p_k(x) \,\, d\lambda(x).
\end{equation*}

As a consequence, the Fourier coefficients of this measure $\widehat{\mu}(j)$ are non-zero only where $j$ is of the same form as in (\ref{jay}).

So, by appropriate choice of the sequence $(n_m)$, we may construct a measure whose non-zero Fourier coefficients on the integers exist entirely in disjoint \emph{clouds} of integers, each an interval of the form 
\begin{equation*}
	\left[n_k - \sum_{m=1}^{k-1} n_m, n_k + \sum_{m=1}^{k-1} n_m \right].
\end{equation*}
Indeed, by choosing a sufficient rate of growth for $n_m$, we can ensure that each successive cloud is as far away as we might like from its predecessor.

The coefficients $\widehat{\mu}(j)$ are not tending to zero: in fact, $\widehat{\mu}(j) = 1/2$ whenever $j$ is an element of the sequence $(n_m)$. Yet the increasing gaps between each cloud and the rapid decay of the coefficients on each cloud ensure that the coefficients tend to zero in mean. Thus the resulting measure is a continuous, but not Rajchman, measure. 

\section{Contractions of a Riesz Product}\label{ContractRP}

The utility of the Riesz product measure usually lies in its well-understood Fourier \emph{coefficients} given by its Fourier transform on the integers.   But at this stage we know little about the value of its Fourier transform except on the integers. 

Consider the Fourier transform of a given Riesz product measure:
\begin{equation*}
\widehat{\mu}(s) = \lim_k \int_\R e^{-2 \pi i s x} \1_{[0,1]}(x) p_k(x) \de \lambda(x).
\end{equation*}

We wish to show that, outside of the clouds, we can arrange for this Fourier transform value to be small.  To do so, we will need to make stipulations regarding the rate of growth of $(n_k)$. As our later construction relies on taking a subsequence of a sequence that grows sufficiently quickly, this does not run counter to our purposes. Let $\Delta \geq 4$.

With this, note that we may slightly widen each cloud. Define
\begin{equation*}
	C_k = \left[ \frac{1}{2} n_k, \frac{3}{2} n_k \right].
\end{equation*}
As a consequence of the growth rate of $n_k$, each cloud is contained in its corresponding $C_k$. In fact, for $\Delta \geq 4$, we have that 
\begin{equation}
	  [n_k - \sum_{j=1}^{k-1} n_j, n_k + \sum_{j=1}^{k-1} n_j] \subset  \left[\frac23 n_k , \frac43 n_k \right]  \subset C_k. \label{distancegap}
\end{equation}
The gap between the cloud centered at $n_k$ and that centered at $n_{k+1}$ must then contain the interval
\begin{equation*}
	G_k = \left( \frac{3}{2} n_k, \frac{1}{2} n_{k+1}  \right) .
\end{equation*}

Now, 
\begin{align}
	|\widehat{\mu}(s)| &= \lim_k |\int_{0}^1 e^{-2 \pi i s x} p_k(x) \de \lambda(x)| \nonumber \\
					& = \lim_k \left|\int_{0}^1 e^{-2 \pi i s x} \left( 1 + \sum_{j=1}^{\ell_k} c_j \cos(2\pi j x)  \right) \de \lambda(x) \right| \nonumber \\
					&\leq \lim_k \sum_{j=1}^{\ell_k} \left| c_j  \int_{0}^1 e^{-2 \pi i s x}    \cos(2\pi j x)   \de \lambda(x) \right|  \nonumber \\
					&=	\lim_k \sum_{j=1}^{\ell_k} c_j  \left| \frac{ e^{-2 \pi i s }\left(j \sin(2\pi j) - is \cos(2\pi j) \right)  +is }{2 \pi (j-s) (j+s) } \right| \nonumber	\\
					&\leq 	\lim_k \sum_{j=1}^{\ell_k} c_j  \left( \frac{|j + s|}{2 \pi |(j-s) (j+s) |}  + \frac{|s|}{2 \pi |(j-s) (j+s) |} \right) \nonumber \\
					&= 	\sum_{j=1}^{\infty} c_j  \frac{1}{2 \pi |j-s|}. \label{bound1}
\end{align}

Recall that $c_j \neq 0$ only when $j = n_k + \sum_{i=1}^{k - 1} s_i n_i$ for some $k$. Since these are all contained in the clouds, the size of $\mu(s)$ may be thought of as depending on the distance of $s$ from the all the sets $C_k$. We now restrict our attention to $s>0$.

\begin{rem}
The inclusion of the characteristic function $\1_{[0,1]}$ when defining the measure on $\R$ eliminates much of the symmetry we may have enjoyed on $\T$. Yet, while $\mu(-s)$ may not equal $\mu(s)$, (\ref{bound1}) remains, and the negative values are even further away from the clouds than the positive ones are. Thus, $|\mu(s)| \leq |\mu(|s|)|$.  
\end{rem}

Suppose that $s \in G_m$. By (\ref{distancegap}), the distance from $s$ to each $C_k$ to the left of $s$ is always more than $\frac{3}{2} n_m - \frac{4}{3} n_k$, while the distance to those $C_k$ to the right of $s$ is greater than $\frac{2}{3} n_k - \frac{1}{2} n_{m+1}$. Noting that each $C_k$ contains $3^{k-1}$ elements $j$ for which $c_j \neq 0$, we have 
\begin{align*}
\sum_{j=1}^{\infty} c_j  \frac{1}{2 \pi |j-s|} &= \sum_{k=1}^{\infty} \sum_{j \in C_k} c_j  \frac{1}{2 \pi |j-s|} \\
& < \sum_{k=1}^m   \frac{3^{k-1}}{ \frac{3}{2} n_m - \frac{4}{3} n_k  } + \sum_{k=m+1}^\infty  \frac{3^{k-1}}{ \frac{2}{3} n_k - \frac{1}{2} n_{m+1}} \\
&\leq \frac{m3^{m-1}}{ \frac{1}{6} n_m} +  \sum_{k=m+1}^\infty  \frac{3^{k-1}}{ \frac{1}{6} n_k }. 
\end{align*}

Since $n_k$ must grow at least as fast as $4^k$, we have that, for all $s \in G_m$, 
\begin{equation*}
	|\mu(s)| \leq B_m = 6(m+C)\frac{3^{m-1}}{n_m},
\end{equation*}
for some constant $C$.   So actually besides $B_m\to 0$ as $m\to \infty$, we have $B_m$ decreasing (at least) exponentially and $\sum_{m=1}^\infty B_m < \infty$.

\section{Construction of a Good Continuous, Non-Rajchman Measure}\label{GOOD}

\begin{thm}There is a Riesz product measure $\mu$ and a decreasing positive sequence $(t_k)$ with $\lim t_k =0$ yielding a sequence of contractions $(\mu_k)$ so that the square function
		\begin{equation*}
		Sf = \left\{ \sum_{k \in \N} | C_{t_k}\lambda \ast f - C_{t_k}\mu \ast f |^2 \right\}^\frac12 
	\end{equation*}
	has a strong $L^2$ bound:
	\begin{equation*}
		\|Sf\|_2 \le C\|f\|_2  \mbox{ for all } f\in L^2(\mathbb R,\lambda).
	\end{equation*} 
\end{thm}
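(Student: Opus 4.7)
The plan is to follow the proof of Proposition~\ref{squares}: applying Plancherel's theorem reduces the $L^2$ bound on $Sf$ to constructing a Riesz product $\mu$ and a decreasing sequence $(t_k)$ with $t_k\to 0$ for which
\[
\Sigma(s) := \sum_{k=1}^\infty |\widehat\lambda(t_k s) - \widehat\mu(t_k s)|^2
\]
is uniformly bounded in $s\in\R$; then $\|Sf\|_2^2 \le \|\Sigma\|_\infty\|f\|_2^2$, using $\widehat{C_t\nu}(s) = \widehat\nu(ts)$. Three ingredients feed the estimate: the cloud/gap bound from Section~\ref{ContractRP}, namely $|\widehat\mu(u)|\le B_m$ for $u \in G_m$ with $B_m$ decaying exponentially and $\sum_m B_m < \infty$; the near-origin smoothness $|\widehat\nu(u) - 1|\le 2\pi|u|$ valid for any probability measure $\nu$ on $[0,1]$; and the decay $|\widehat\lambda(u)| \le 2/|u|$ for $|u|\ge 1$.

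I would construct $(n_m)$ and $(t_k)$ in tandem. First fix $(n_m)$ growing very fast, say with $\log n_m$ doubly exponential, so that $B_m$ decays super-exponentially and the logarithmic gaps between consecutive clouds are enormous. Then choose $(t_k)$ strongly lacunary, $t_{k+1} \le t_k/4$, with $\log(1/t_k)$ also doubly exponential. For each fixed $s$, partition the indices $k$ according to the location of $t_k s$: the near-zero indices $\{k : |t_k s|\le 1\}$ contribute a geometric sum $\sum (4\pi t_k s)^2 = O(1)$ by smoothness; the gap indices $\{k : t_k s\in G_m \text{ for some } m\}$ contribute $2|\widehat\lambda(t_k s)|^2 + 2B_m^2 \le 8/(9n_m^2) + 2B_m^2$ per term, and since only $O(m)$ values of $k$ land in each $G_m$, the total is summable thanks to the exponential decay of $B_m$.

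The main obstacle is the cloud indices $\{k : t_k s\in C_m \text{ for some } m\}$, where $|\widehat\mu(t_k s)|$ may be of order one. The factor-$3$ multiplicative width of each $C_m$ together with the $4$-lacunarity of $(t_k)$ ensures that for each $s$ and each $m$, at most one $k$ places $t_k s$ inside $C_m$; what remains is to bound, uniformly in $s$, the number of clouds hit by the geometric progression $\{t_k s\}_{k\ge 1}$. This is the decisive step, and it is arranged by choosing $\log n_m$ and $\log(1/t_k)$ to be doubly-exponentially lacunary with relative growth rates tuned so that the equation $\log n_m \approx \log s - \log t_k$ has only boundedly many integer solutions $(k,m)$ for every $s\in\R$; the sumset of two sufficiently lacunary doubly-exponential sequences is so sparse that each $s$ admits only a universally bounded number of pairs. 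This bounds the cloud contribution by a constant and yields $\Sigma(s)\le C$ uniformly, completing the argument.
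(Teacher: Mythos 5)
Your outline follows the same route as the paper's proof: Plancherel reduces everything to a uniform bound on $\Sigma(s)=\sum_k|\widehat\lambda(t_ks)-\widehat\mu(t_ks)|^2$; the near-origin and gap indices are handled exactly as you say, using the smoothness of a probability measure's transform near $0$ and the estimate $|\widehat\mu(u)|\le B_m$ on $G_m$ from Section~\ref{ContractRP}; and the whole weight of the argument falls on showing that each $s$ meets only boundedly many dilated clouds. Your version of that last step is essentially the paper's second, ``more concrete'' construction (the remark following the proof), which takes $n_m=2^{2^{2m}}$ and $t_k=2^{-2^{2k+1}}$; the paper's main proof instead builds $(n_m)$ and $(t_k)$ by an alternating induction that totally orders the dilated clouds $\rho_kC_m$, $\rho_k=1/t_k$, and then invokes Remark~\ref{VARIANT}.

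The one point you must tighten is your self-described decisive step, and as written it contains a sign error that changes its nature. Writing $a_m=\log_2 n_m$ and $b_k=\log_2(1/t_k)$, the condition $t_ks\in C_m$ is $t_ks\approx n_m$, i.e.\ $\log_2 s\approx a_m+b_k$: the relevant object is the \emph{sumset} $\{a_m+b_k\}$, not the set $\{a_m-b_k\}$ that your displayed relation $\log n_m\approx\log s-\log t_k$ describes. This matters because for the difference set your heuristic is simply false --- however lacunary the two sequences are, taking $b_k=a_k-c$ makes $a_m-b_m$ constant, so one value of $s$ would be hit infinitely often --- whereas for the sumset plain lacunarity of $(a_m)$ and $(b_k)$ genuinely suffices: if $a_m+b_k\in[v-1,v+1]$ then one of $a_m,b_k$ exceeds $v/3$, lacunarity leaves $O(1)$ candidates for that one, and the other is then pinned to an interval of length $2$, which again contains $O(1)$ lacunary terms. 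Once the relation is stated correctly, no delicate tuning of relative growth rates is needed; the paper's $2$-adic parity choice achieves multiplicity exactly one, which is cleaner than necessary. Two harmless slips to repair in the write-up: your partition omits the band $1<t_ks<\tfrac12 n_1$, where (\ref{bound1}) still gives $|\widehat\mu(u)|=O(1/n_1)$ and only $O(\log n_1)$ lacunary $k$ can land, so it contributes $O(1)$; and the number of $k$ with $t_ks\in G_m$ is of order $\log(n_{m+1}/n_m)$, which for doubly exponential $n_m$ is exponential in $m$ rather than $O(m)$ --- the gap sum still converges because $B_m^2$ and $n_m^{-2}$ decay far faster than that count grows.
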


\begin{rem}
	There appear to be few obstacles to stating this result in a different way, perhaps in greater generality. Suppose we began with any Riesz product measure satisfying the necessary growth conditions for the Fourier estimates from Section \ref{ContractRP} to hold. The arguments below would then require us to refine this measure through selection of a subsequence of $(n_m)$. This must be done in order to ensure the disjointness of the clouds. Given the need to manipulate the placement of the non-zero coefficients, these arguments do not reveal whether the conclusion holds generally; that is, whether there is, for each Riesz product measure, a sequence $(t_k)$ so that the resulting square function has a strong $L^2$ bound.   
\end{rem}

\begin{proof}
We consider $\mu$ and $\lambda$ as measures on $\R$ with support contained in $[0,1]$. 
	
Our goal is to adapt the framework from Section \ref{result1} to construct a Riesz product measure $\mu$ and a sequence $(t_k)$, with $\lim t_k = 0$, so that 
\begin{equation*}
	\mathcal S (s) = \sum_{k \in \N} | \widehat{\lambda}(t_ks)  - \widehat{\mu}(t_ks) |^2 \le C 
\end{equation*}
for some constant, $C$. As before, we then may conclude that the corresponding square function has a strong $L^2$ bound. 

Since the clouds are contained in the positive half of the real line, we will focus on $s \geq 0$. The construction will be symmetric on the negative reals, where, in any case, we have that $|\widehat{\mu}(s)| \leq |\widehat{\mu}(|s|)|$.

One method for accomplishing this is to intertwine the images of the clouds under dilation by $\rho_k = \frac{1}{t_k}$. Let $C_m$ denote the interval containing the cloud centered at $n_m$, as in Section \ref{productmeasures}, and let $\rho_0 =1$.  We also use the following notation:  $A,B\subset \mathbb R$ have $A < B$ if for all $x\in A$ and $y\in B$, we have $x < y$.   We use the fact that a cloud $C_m =[a_m,b_m]$ is an interval about $n_m$ with $b_m/a_m\le C$ for some constant.  This is necessary for the following induction.

We first choose $n_1$, which fixes $C_1$.  Then choose $t_1$ so that $C_1 = \rho_0C_1 < \rho_1C_1$.  Next we choose $n_2 > n_1$ so that $\rho_1C_1 < \rho_0C_2 < \rho_1C_2$, and then $t_2 < t_1$ so that $\rho_1C_2 < \rho_2C_0 < \rho_2C_1 < \rho_2C_2$.    It is worth observing here, since it also occurs in the inductive steps, that while ratios may remain the same, these choices force the distance between successive clouds to increase.  Also, with each choice of $n_m$ subsequently, increasing the distance between clouds decreases the width of the cloud $C_m$ relative to the surrounding gap.  

Continue this inductively. As $t_k$ decreases, $\rho_k$ increases. At the $k^{\mbox{th}}$ step, we choose $n_{k+1}$ so that we have 
\begin{equation*}
\rho_{k-1}C_k < \rho_kC_0 <\rho_kC_1 <\cdots < \rho_kC_k < \rho_0C_{k+1} < \rho_1C_{k+1} <\cdots < \rho_kC_{k+1}.
\end{equation*}
It follows that for any distinct $k,l$, and any $m_1,m_2$, only one of the following occurs:  
\begin{equation*}
\rho_kC_{m_1} < \rho_lC_{m_2}, \, \rho_lC_{m_1} < \rho_kC_{m_2},  \, \rho_kC_{m_2} < \rho_lC_{m_1}, \mbox{ or } \rho_lC_{m_2} < \rho_kC_{m_1}.  
\end{equation*}

This gives the disjointness that we wanted; thus $t_ks$ is in, at most, one cloud for all $k \in \N$. 

We now use the observation in Remark~\ref{VARIANT}.  First, take the sequence $(t_n)$ constructed above.  Then we write $\widehat {C_{t_n}\mu} = \phi_n + \sigma_n$
where $\sigma_n(s) = \sum\limits_{m=1}^\infty \widehat \mu(t_ns) 1_{C_m}(t_ns)$ and $\phi_n =  \widehat \mu(t_ns) 1_{C_m}(t_ns) - \sigma_n$.  Then $\phi_n$ are Lebesgue measurable and 
vanishing at infinity.   We have automatically the required locally asymptotic to $1$ at zero because $\widehat {C_{t_n}\mu}$ has this property and $\sigma_n$ is zero in a neighborhood of $0$.
\end{proof}

\begin{rem}
	The intertwining argument is not the only way in which we might approach the construction of the Riesz product and associated contraction sequence $(t_k)$. Consider the following, more concrete, construction.
	
	Once more, we look at only $s>0$ and wish to limit the number of $k$ for which $\widehat{\mu}(t_ks)$ is large. This can only occur where $t_ks \in \cup_{m=1}^\infty C_m$. The goal is to construct an $n_m$ and $t_k$ so that for each $s$ this can occur for only a few $k$. 
	
	Let $t_k = 2^{-b_k}$ and $n_m = 2^{a_m}$, where $(b_k)$ and $(n_m)$ are unbounded positive increasing sequences which we will determine later. 

	$t_ks \in C_m$ only if 
	\begin{equation*}
		\frac{1}{2} n_m \leq t_ks \leq 2n_m.
	\end{equation*}
	Equivalently, 
	\begin{equation*}
	 -1	+ a_m  \leq -b_k + \log_2 s \leq 1 +  a_m
	\end{equation*}

	Suppose we have distinct $k_1$ and $k_2$ such that
	\begin{align*}
		-1	+ a_{m_1} &\leq -b_{k_1} + \log_2 s \leq 1 +  a_{m_1} \mbox{ and}\\
		-1	+ a_{m_2} &\leq -b_{k_2} + \log_2 s \leq 1 +  a_{m_2}.
	\end{align*}
 So multiplying the first of these inequalities by $-1$ and adding that to the second of these inequalities, we would
have 
	\begin{equation*}
		a_{m_2}- a_{m_1} = b_{k_1} -b_{k_2}.
	\end{equation*}
	and since $k_1 \neq k_2$, we must also have $m_2 \neq m_1$. 

	Let $b_k = 2^{2k+1}$ and $a_m = 2^{2m}$. Then there are an even number of twos in the prime factorization of $a_{m_2}- a_{m_1}$ while there are an odd number of twos in that of $b_{k_1} -b_{k_2}$. Thus they cannot be equal, and there are no distinct $k_1$ and $k_2$ so that $t_{k_1}s$ and $t_{k_2}s$ are both in $\cup_{m \in \N} C_m$. 

Even without taking yet another subsequence, we have the necessary subsequence bound.  See Proposition~\ref{explicit} and follow this line of reasoning.
So with a initial choice of 
	\begin{equation*}
		n_m = 2^{2^{2m}} \mbox{, and } t_k = \frac{1}{2^{2^{2k+1}}},
	\end{equation*}
	we can have the Riesz product measure yielding a pointwise a.e. good sequence of contraction operators $C_{t_k}\mu$.
\end{rem}

As a corollary, we have the following result.
\begin{prop}\label{RieszAI} There is a Riesz product measure $\mu$ on $[0,1]$, which is continuous and non-Rajchman,  and there is a sequence $t_k\to 0$ such that for all $f\in L^2(\mathbb R)$, the convolution of $f$ with the sequence of contractions, $C_{t_k}\mu$, corresponding to $(t_k)$ converges pointwise a.e. That is,
	\begin{equation*}
		\lim_k	C_{t_k}\mu \ast f = f \mbox{ a.e.} 
	\end{equation*}	
\end{prop}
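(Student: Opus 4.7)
The plan is to obtain Proposition~\ref{RieszAI} as a direct corollary of the preceding theorem, using the standard principle that an $L^2$ bound for a square function forces the summands to tend to zero almost everywhere. Let $\mu$ and $(t_k)$ be the Riesz product measure and contraction sequence produced by the theorem, and fix $f \in L^2(\mathbb{R},\lambda)$.

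First, from $\|Sf\|_2 \le C\|f\|_2$ I would conclude that $Sf(x) < \infty$ for almost every $x$, so that
\[
\sum_{k=1}^\infty |C_{t_k}\lambda \ast f(x) - C_{t_k}\mu \ast f(x)|^2 < \infty \quad \text{a.e.}
\]
In particular the general term tends to zero, i.e.\ $C_{t_k}\lambda \ast f(x) - C_{t_k}\mu \ast f(x) \to 0$ for a.e.\ $x$. Interpreting $C_{t_k}\lambda$ as the contraction of the Lebesgue probability measure on $[0,1]$ --- consistent with the convention ``$\mu$ and $\lambda$ as measures on $\R$ with support contained in $[0,1]$'' adopted in the theorem --- one has $C_{t_k}\lambda \ast f(x) = \tfrac{1}{t_k}\int_0^{t_k} f(x-w)\, d\lambda(w)$, which tends to $f(x)$ almost everywhere by the Lebesgue differentiation theorem (as recorded in the remark following the definition of $C_t\mu$). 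Combining the two statements yields $C_{t_k}\mu \ast f(x) \to f(x)$ a.e.

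To complete the conclusion of the proposition, I would verify the two qualitative properties of $\mu$. Continuity of a Riesz product measure with $|a_m|\le 1$ and sufficiently lacunary frequencies $(n_m)$ is classical (see the references in Section~\ref{productmeasures}); the non-Rajchman property is already in hand from Section~\ref{productmeasures}, since the choice $a_m=1$ forces $\widehat{\mu}(n_m)=1/2$ for every $m$, so the Fourier coefficients cannot decay along the integers.

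There is no genuine obstacle here, as this is essentially a transcription of the square function estimate plus Lebesgue differentiation. The only point requiring attention is the pairing of the comparison sequence: one must verify that the $C_{t_k}\lambda$ appearing in $Sf$ is itself a bona fide approximate identity for which a.e.\ convergence is already available, so that the square function bound can be converted into pointwise convergence of the sequence of interest. Once this is checked, the three lines above deliver the result.
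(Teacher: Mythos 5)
Your proposal is correct and is essentially the argument the paper intends: the proposition is stated as a corollary of the square-function theorem, and the deduction (the $L^2$ bound on $Sf$ forces $Sf(x)<\infty$ a.e., hence $C_{t_k}\lambda\ast f - C_{t_k}\mu\ast f\to 0$ a.e., which combines with the Lebesgue differentiation theorem for $C_{t_k}\lambda\ast f$) is exactly the mechanism used for Corollary~\ref{result1} via Proposition~\ref{squares}. Your added verification that $\mu$ is continuous and non-Rajchman matches what Section~\ref{productmeasures} already establishes.
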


\subsection {An Alternative Approach for the Fourier Transforms off Clouds}

As we have discussed in the previous section, one difficulty with the Fourier transform on $\mathbb R$ of a Riesz product is that we do not know precisely its behavior on $\mathbb R$.  Instead of carrying out the analysis above, we could just stick what we know about the Fourier transform on the integers and use the theorems in Goldberg~\cite{Goldberg}. We take the Riesz product construction as above giving us $\mu$.  We can extend this periodically to $\mathbb R$, multiply by $(1 - \cos (2\pi t))/t^2$ and normalize the result to get a new measure $\nu$ on $\mathbb R$ with these properties.  First, $\widehat {\nu} = \widehat {\mu}$ at all integer values.  But also, $\widehat {\nu}$ being interpolated from $\widehat \mu$ on $\mathbb Z$ has its Fourier transform equal to zero on all intervals $[k,k+1]$ disjoint from the intervals $C_m$ above.

 Take $L$ to be the Fourier transform of $\ell = 1_{[0,1]}\lambda$. 
\begin{prop} There is a constant $C$ such that the square function 
\[Sf = \left (\sum\limits_{k=1}^\infty |C_{t_k}(\nu)\ast f - C_{t_k}(\ell)\ast f|^2\right )^{1/2}\] 
satisfies $\|Sf\|_2 \le C\|f\|_2$ for all $f\in L^2(\mathbb R)$.
\end{prop}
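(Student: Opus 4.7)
The plan is to follow the Plancherel-based argument in Proposition \ref{squares}. Because convolution with $C_{t_k}\nu$ and $C_{t_k}\ell$ corresponds on the Fourier side to multiplication by $\widehat{\nu}(t_k\,\cdot\,)$ and $L(t_k\,\cdot\,)$, Plancherel's theorem gives
\[
\|Sf\|_2^2 \le \int_{\mathbb R} |\widehat{f}(s)|^2 \, \mathcal S(s)\, ds, \qquad \mathcal S(s) = \sum_{k=1}^\infty |\widehat{\nu}(t_k s) - L(t_k s)|^2,
\]
so the entire problem reduces to showing $\mathcal S(s) \le C^2$ uniformly in $s \in \mathbb R$. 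The payoff of replacing the Riesz product $\mu$ by $\nu$ is that Goldberg's interpolation theorem from \cite{Goldberg} now gives explicit structural information about $\widehat{\nu}$: it agrees with $\widehat{\mu}$ on $\mathbb Z$ and vanishes identically on every integer interval $[k,k+1]$ that is disjoint from the clouds $C_m$ of Section \ref{ContractRP}.

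Fix $s \ge 0$; the case $s < 0$ is handled by the same analysis, since $|\widehat{\nu}(-s)| \le |\widehat{\nu}(|s|)|$ and analogously for $L$. I would split the sum defining $\mathcal S(s)$ into three regimes according to where $t_k s$ lies. In the near-zero regime, both $\widehat{\nu}(t_k s)$ and $L(t_k s)$ are within $O(t_k s)$ of $1$ (by the standard Lipschitz-at-zero estimate for Fourier transforms of probability measures on $[0,1]$), so this portion of the sum is bounded by $C s^2 \sum_k t_k^2$, which is finite by the rapid decay of $(t_k)$ arranged in the construction. In the cloud regime, the intertwining of the dilates $\rho_k C_m$ produced in the theorem preceding this proposition ensures that at most one value of $k$ can place $t_k s$ inside some cloud, so this regime contributes at most $(\|\widehat{\nu}\|_\infty + \|L\|_\infty)^2 \le 4$. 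In the far regime, with $t_k s$ large and avoiding the clouds, the vanishing statement for $\widehat{\nu}$ reduces the term to $|L(t_k s)|^2 = O(1/(t_k s)^2)$; the geometric decay of $(t_k)$ then makes the tail sum converge to a bound independent of $s$.

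The principal obstacle will be the transition strips: those integer intervals that meet but are not contained in a cloud, where $\widehat{\nu}$ is not automatically zero. To control these one has to use Goldberg's interpolation formula in its quantitative form, obtaining a decay estimate on $|\widehat{\nu}(t)|$ proportional to the reciprocal of the distance from $t$ to $\bigcup_m C_m$, mirroring the decay of $L$. Once such a bound is in place, the contribution of the boundary terms is dominated, in the style of the estimate \eqref{bound1} and its consequence $|\widehat{\mu}(s)| \le B_m$ for $s \in G_m$, by the tail $\sum_{m} B_m$ which was already shown in Section \ref{ContractRP} to be summable (indeed, exponentially so). Assembling the three regimes then yields the uniform bound $\mathcal S(s) \le C^2$ required to conclude $\|Sf\|_2 \le C\|f\|_2$.
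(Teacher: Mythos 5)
Your overall strategy coincides with the paper's: the published proof is itself only a two-sentence sketch that reduces the claim, via Plancherel, to the uniform multiplier bound $\sum_k |\widehat{\nu}(t_k x) - L(t_k x)|^2 \le C^2$ and then appeals to the facts already established about the choices of $n_m$ and $t_k$. Your three-regime decomposition is a sensible way to actually carry out that estimate; the cloud regime (at most one $k$ places $t_k s$ in a cloud, by the intertwining) and the far regime (where $\widehat{\nu}$ vanishes and the terms $|L(t_k s)|^2$ sum geometrically) are handled correctly.

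Two details need repair. First, in the near-zero regime your stated bound $C s^2 \sum_k t_k^2$ is not uniform in $s$, which defeats the whole purpose of the estimate. You must restrict the sum to those $k$ with $t_k s \le 1$, observe that these values $t_k s$ decrease at least geometrically because the ratios $t_{k+1}/t_k$ are bounded away from $1$ (indeed they tend to $0$ in the construction), and conclude that $\sum_{k:\, t_k s \le 1} (t_k s)^2 \le C$ with $C$ independent of $s$. Second, the ``transition strips'' are not the obstacle you make them out to be, and no quantitative form of Goldberg's interpolation is needed: an integer interval $[j, j+1]$ on which $\widehat{\nu}$ may fail to vanish must meet some cloud $C_m$, hence is contained in the widened interval $[\tfrac12 n_m - 1, \tfrac32 n_m + 1]$, which is still an interval about $n_m$ with bounded endpoint ratio. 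Running the intertwining and disjointness argument with these slightly widened clouds absorbs the boundary terms into the cloud regime at the cost of a fixed constant, so the only inputs really used are the ones the paper cites: the vanishing of $\widehat{\nu}$ off the clouds and the separation of the dilated clouds forced by the choices of $n_m$ and $t_k$.
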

\begin{proof} We take Fourier transforms and show that there is a uniform bound on the associated square function multiplier
\[\sum\limits_{k=1}^\infty |\widehat {\nu}(t_kx) - L(t_kx)|^2 \le C^2\]
for all $x\in \mathbb R$.  We use the facts proved above about our choices of $n_m$ and $t_k$ to get this estimate.  
\end{proof}

\begin{cor}\label{strong} There is a constant $C$ and a maximal function inequality $\|\sup\limits_{k\ge 1} |C_{t_k}(\nu)\ast f|\|_2 \le C\|f\|_2$ for all $f\in L_2(\mathbb R)$.  Moreover, for all $f\in L_2(\mathbb R)$, as $k\to \infty$,  $C_{t_k}(\nu)\ast f \to f$ a.e.
\end{cor}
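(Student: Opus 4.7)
The plan is to bootstrap both the maximal inequality and the a.e.\ convergence directly from the preceding square function bound, combined with the classical Lebesgue differentiation theorem (which applies to $C_{t_k}(\ell)\ast f$ because $C_{t_k}(\ell) = \frac{1}{t_k} \1_{[0,t_k]}\lambda$).

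For the maximal inequality, I would start with the pointwise comparison
\[
\sup_{k\ge 1} |C_{t_k}(\nu)\ast f(x)| \;\le\; \sup_{k\ge 1}|C_{t_k}(\ell)\ast f(x)| \;+\; \sup_{k\ge 1}\bigl|C_{t_k}(\nu)\ast f(x) - C_{t_k}(\ell)\ast f(x)\bigr|.
\]
The first summand is dominated by the one-sided Hardy--Littlewood maximal function of $f$, hence has a strong $L^2$ bound with a universal constant. The second summand is controlled termwise by the square function,
\[
\sup_{k\ge 1}\bigl|C_{t_k}(\nu)\ast f - C_{t_k}(\ell)\ast f\bigr| \;\le\; \Bigl(\sum_{k\ge 1}\bigl|C_{t_k}(\nu)\ast f - C_{t_k}(\ell)\ast f\bigr|^2\Bigr)^{1/2} = Sf,
\]
and $\|Sf\|_2 \le C\|f\|_2$ by the proposition just proved. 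Combining the two bounds via the triangle inequality in $L^2$ gives the desired maximal inequality.

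For the a.e.\ convergence, observe that since $\|Sf\|_2 < \infty$, we have $Sf(x) < \infty$ for a.e.\ $x$. In particular, for a.e.\ $x$ the series $\sum_{k\ge 1}|C_{t_k}(\nu)\ast f(x) - C_{t_k}(\ell)\ast f(x)|^2$ converges, which forces
\[
C_{t_k}(\nu)\ast f(x) - C_{t_k}(\ell)\ast f(x) \;\longrightarrow\; 0 \quad \text{a.e. as } k\to\infty.
\]
Meanwhile, the Lebesgue differentiation theorem yields $C_{t_k}(\ell)\ast f(x) \to f(x)$ a.e.\ (this is the classical statement that $\frac{1}{t_k}\int_0^{t_k} f(y-x)\,d\lambda(x) \to f(y)$ a.e.\ for $f\in L^1_{\mathrm{loc}}$, which certainly applies to $f \in L^2(\mathbb{R})$). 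Adding the two limits gives $C_{t_k}(\nu)\ast f \to f$ a.e.

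The argument is essentially mechanical once the square function estimate of the preceding proposition is in hand; the main obstacle is entirely absorbed into that earlier proposition (constructing $n_m$ and $t_k$ so that the clouds separate under dilation). Here there is nothing further to grind, though one may wish to verify with care that no measure-theoretic subtleties arise from the convolution $\nu\ast f$ with the singular measure $\nu$ --- this is addressed by the appendix referenced in the introduction, so I would simply cite it.
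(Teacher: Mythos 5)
Your proof is correct and follows exactly the route the paper intends (the paper states this corollary without writing out the deduction): dominate the maximal difference pointwise by the square function from the preceding proposition, use the Hardy--Littlewood bound for the $C_{t_k}(\ell)$ part, and get a.e.\ convergence from the finiteness of $Sf$ a.e.\ together with Lebesgue differentiation, with no dense-class argument needed.
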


Now consider $\nu$ restricted to $[0,1]$.  We can take a small $\epsilon$ and consider $J = [0,1-\epsilon]$ and consider the measure $\mu_0 = c1_J\mu$, with $c$ chosen so that $\mu_0$ is a probability measure.  Because $\epsilon$ is small, we would have $\mu_0$ a continuous measure whose Fourier transform on $\mathbb Z$ does not go to zero.  But moreover, there is some constant $C$ such that $C_{t_k}(\mu_0) \le C\, C_{t_k}(1_J\nu) \le C\, C_{t_k}(\nu)$.  Hence, from Corollary~\ref{strong}, we get also

\begin{prop}   There is a constant $C$ and a maximal function inequality $\|\sup_k |C_{t_k}(\mu_0)\ast f|\|_2 \le C\|f\|_2$ for all $f\in L_2[0,1]$.  Moreover, for all $f\in L^2[0,1]$, as $k\to \infty$, $C_{t_k}(\mu_0)\ast f \to f$ a.e..
\end{prop}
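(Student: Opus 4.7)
The plan is to deduce both assertions from Corollary~\ref{strong} by a pointwise domination of measures. The key observation is that once $\mu_0$ is dominated by a constant multiple of $\nu$, all the hard analytic work already carried out for $\nu$ transfers with almost no effort.

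First I would verify that there is a constant $C_0$ such that $\mu_0 \le C_0\cdot 1_J\nu$ as nonnegative measures on $\mathbb R$. By the construction of $\nu$, its restriction to $[0,1]$ is $(1/Z)\frac{1-\cos(2\pi t)}{t^2}\,d\mu(t)$, where $Z$ is the normalization constant. The density $(1-\cos(2\pi t))/t^2$ is continuous and strictly positive on $J = [0,1-\epsilon]$ (its limit at $t=0$ is $2\pi^2$), and is therefore bounded below by some $c_\epsilon>0$ there. Hence $1_J\nu \ge (c_\epsilon/Z)\,1_J\mu$, and since $\mu_0 = c\cdot 1_J\mu$, the desired domination holds with $C_0 = cZ/c_\epsilon$.

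Contraction preserves the ordering of nonnegative measures, so $C_{t_k}\mu_0 \le C_0\cdot C_{t_k}(1_J\nu) \le C_0\cdot C_{t_k}\nu$ for each $k$. For $f\in L^2[0,1]$, extended by zero to $\mathbb R$, monotonicity of convolution against positive measures gives
\[
|C_{t_k}\mu_0 \ast f(y)|\le C_{t_k}\mu_0\ast |f|(y)\le C_0\cdot C_{t_k}\nu \ast |f|(y).
\]
Taking the supremum over $k$, then the $L^2$ norm, and applying Corollary~\ref{strong} to $|f|\in L^2(\mathbb R)$ delivers the asserted maximal inequality. For the pointwise statement I would invoke the Banach principle: given this maximal inequality, it suffices to verify a.e. convergence on a dense subclass of $L^2[0,1]$. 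For $f\in C([0,1])$ extended by zero, the estimate
\[
|C_{t_k}\mu_0\ast f(y)-f(y)|\le \int|f(y-t_kx)-f(y)|\,d\mu_0(x)
\]
tends to zero uniformly in $y$ as $t_k\to 0$, by uniform continuity of $f$ and the fact that $\mu_0$ is a probability measure supported in $[0,1-\epsilon]$. Density of $C([0,1])$ in $L^2[0,1]$ together with the maximal inequality then propagates a.e. convergence to all $f\in L^2[0,1]$.

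The only real obstacle is the bookkeeping in the first step: one must ensure that after the normalization of $\nu$ and the truncation defining $\mu_0$, the density $(1-\cos(2\pi t))/t^2$ is bounded below by a strictly positive constant on $J$, so that a single finite $C_0$ suffices. Once this is in hand, monotonicity of convolution against positive measures and the standard Banach principle do the remaining work.
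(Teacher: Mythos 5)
Your proposal is correct and follows essentially the same route as the paper: the paper likewise obtains the result from the domination $C_{t_k}(\mu_0) \le C\, C_{t_k}(1_J\nu) \le C\, C_{t_k}(\nu)$ together with Corollary~\ref{strong}. You simply make explicit two points the paper leaves implicit, namely that the density $(1-\cos(2\pi t))/t^2$ is bounded below on $[0,1-\epsilon]$ (which is exactly why the truncation to $J$ is needed, since the density vanishes at $t=1$) and the Banach-principle passage from the maximal inequality to a.e.\ convergence via the dense class of continuous functions.
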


\begin{cor} \label{Alternate} There is a continuous probability measure $\mu_0$ on $[0,1]$, which is not a Rajchman measure, for which there is an approximate identity given by the contractions $C_{t_k}(\mu_0)$ , so that 
 for all $f\in L^2[0,1]$, as $k\to \infty$, $C_{t_k}(\mu_0)\ast f \to f$ a.e.
\end{cor}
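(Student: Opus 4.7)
The plan is to assemble the corollary from the preceding Proposition together with three short verifications about the auxiliary measure $\mu_0 = c\,\mathbf{1}_J\mu$, where $J=[0,1-\epsilon]$, $\mu$ is the Riesz product built in Sections \ref{productmeasures}--\ref{GOOD}, and $c = 1/\mu(J)$. The preceding Proposition already delivers the a.e.\ convergence $C_{t_k}(\mu_0)\ast f \to f$ on $L^2[0,1]$, so the remaining work is to confirm that $\mu_0$ has the qualitative properties claimed: continuous, non-Rajchman, and forming an approximate identity under contraction by $(t_k)$.

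First, I would check that $\mu_0$ is a well-defined continuous probability measure on $[0,1]$. Choosing $\epsilon>0$ small enough guarantees $\mu(J)>0$ (in fact close to $1$), so the normalization constant $c$ exists. Since Riesz products with coefficients $|a_m|\le 1$ are known to be continuous (no atoms), restriction to $J$ and renormalization preserves this, so $\mu_0$ is continuous.

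Next, the non-Rajchman property. For $k\in\mathbb{Z}$,
\[
\widehat{\mu_0}(k) \;=\; c\int_0^{1-\epsilon} e^{-2\pi i k x}\,d\mu(x) \;=\; c\,\widehat{\mu}(k) \;-\; c\int_{1-\epsilon}^{1} e^{-2\pi i k x}\,d\mu(x),
\]
and the second term is bounded in absolute value by $c\,\mu((1-\epsilon,1])$, which tends to $0$ as $\epsilon\to 0$ by continuity of $\mu$ at the endpoint $1$. Since $\widehat{\mu}(n_m)=1/2$ for every element of the lacunary sequence $(n_m)$ used to build $\mu$, a sufficiently small $\epsilon$ forces $|\widehat{\mu_0}(n_m)|\ge 1/4$ for all $m$. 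Thus $\widehat{\mu_0}(k)\not\to 0$ as $|k|\to\infty$, and $\mu_0$ fails the Rajchman condition.

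Finally, since the support of $C_{t_k}(\mu_0)$ lies inside $[0,(1-\epsilon)t_k]\subseteq [0,t_k]$ and $t_k\to 0$, the supports shrink to zero, which is enough to make $(C_{t_k}(\mu_0))$ an approximate identity on $L^1(\mathbb{R})$. Combined with the pointwise a.e.\ convergence supplied by the preceding Proposition, this completes the corollary. The only mildly delicate step in this assembly is the non-Rajchman verification: one must be sure that the truncation by $\mathbf{1}_J$ does not destroy the large Fourier coefficients along $(n_m)$, which is exactly why the freedom to take $\epsilon$ small matters; everything else is immediate from results already established.
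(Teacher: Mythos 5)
Your proposal is correct and follows essentially the same route as the paper, which presents Corollary~\ref{Alternate} as an immediate consequence of the preceding Proposition together with the observation that $\mu_0 = c\,\1_J\mu$ is continuous and non-Rajchman for small $\epsilon$. Your explicit estimate $|\widehat{\mu_0}(n_m)| \ge c\bigl(\tfrac12 - \mu((1-\epsilon,1])\bigr) \ge \tfrac14$ simply fills in the detail the paper leaves as an assertion.
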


\begin{rem}   It is possible to use this technique and an adjustment of the multiplier to derive the conclusion of Corollary~\ref{Alternate} for a Riesz product measure.
\qed
\end{rem}
\section{ A Construction of a Bad Continuous, Non-Rajchman Measure}\label{BAD}

Fix $a < 1$.  The basic idea is to construct a probability measure $\mu$ so that for the contractions by any subsequence of $a^l$ are an approximate identity that fail a.e. convergence on a Lebesgue class, perhaps even $L^\infty(\mathbb R)$.   

First take the interval $J = [b,1]$ with $a < b < 1$.  For notation later, we will
want to have $J_0 = J$.  Take $J_l = a^lJ, l \ge 1$.  These are pairwise disjoint because
$a^{l+1} < ba^l  $ for all $l\ge 1$.  

\begin{prop}\label{KRONECKER} There is a compact, perfect set $K \subset J$ such that for all $L$, the compact set $\bigcup\limits_{l=1}^L a^l K$ is a Kronecker set.
\end{prop}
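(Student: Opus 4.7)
The plan is to build $K$ as the intersection $K = \bigcap_n K_n$ of a decreasing sequence, where each $K_n \subset J$ is a disjoint union of finitely many closed intervals, the maximum interval length in $K_n$ tends to zero, and each interval of $K_{n-1}$ contains at least two intervals of $K_n$, ensuring that $K$ is compact and perfect. A standard diagonalization enumerates triples $(L_n,f_n,\vep_n)$ with $L_n\in\N$, $\vep_n=1/n$, and $f_n$ running through a fixed countable uniformly dense family in $C(\bigcup_{l=1}^{L_n}a^lJ,\T)$, arranged so every triple $(L,f,\vep)$ is encountered infinitely often. At stage $n$ we refine $K_{n-1}$ to $K_n$ and produce a real $t_n$ with
\[
\big|e^{2\pi i t_n x}-f_n(x)\big|<\vep_n\qquad\text{for every }x\in\bigcup_{l=1}^{L_n}a^lK_n.
\]
Since this estimate is preserved under any subsequent restriction, Tietze extension of an arbitrary continuous $g:\bigcup_{l=1}^La^lK\to\T$ to $\bigcup_{l=1}^La^lJ$, combined with the uniform density of the chosen family, then yields the Kronecker property for $\bigcup_{l=1}^La^lK$ at every $L$.

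The inductive step has two ingredients. Enumerate the intervals of $K_{n-1}$ as $I_1,\dots,I_N$, pick centers $c_j\in I_j$, and perturb the tuple $(c_1,\dots,c_N)$ within $\prod_jI_j$ so that the finite set $\{a^lc_j:1\le l\le L_n,\ 1\le j\le N\}$ is $\Q$-linearly independent. A putative integer relation $\sum_{l,j}m_{l,j}a^lc_j=0$ rewrites as $\sum_jc_j\big(\sum_lm_{l,j}a^l\big)=0$; under the (implicit) hypothesis that $a$ is transcendental---a condition forced by the proposition itself, since a nontrivial integer polynomial relation on $a$ would make $\{a^lc\}_{l=1}^L$ fail $\Z$-independence for large $L$ and hence preclude the Kronecker property---the coefficient of each $c_j$ vanishes only when all $m_{l,j}=0$, so the excluded ``bad'' set is a countable union of proper affine hyperplanes in $\R^N$ and is easily avoided. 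With independence secured, Kronecker's simultaneous Diophantine approximation theorem supplies $t_n$ such that $t_na^lc_j\bmod 1$ is as close as desired to $\arg f_n(a^lc_j)/(2\pi)$, giving $|e^{2\pi it_na^lc_j}-f_n(a^lc_j)|<\vep_n/2$. Using uniform continuity of $f_n$ and of $x\mapsto e^{2\pi it_nx}$ on the compact set $\bigcup_la^lJ$, we then shrink each $I_j$ to a subinterval $I_j'\ni c_j$ small enough that the estimate persists with tolerance $\vep_n$ throughout $\bigcup_la^lI_j'$, and split $I_j'$ into two disjoint closed subintervals of diameter less than $1/n$ to define $K_n$.

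The main obstacle is ensuring that these choices can be executed compatibly with the accumulating constraints from earlier stages. The crucial observation is that the character approximation secured at stage $n$ is automatically inherited by every $K_m \subset K_n$ with $m \ge n$, so later stages may freely shrink intervals without disturbing earlier progress; and the independence step at stage $n$ needs only to avoid a countable union of hyperplanes in $\prod_jI_j$, leaving a dense $G_\delta$ of full Lebesgue measure of admissible center choices. The only substantive difficulty is verifying the $\Q$-independence of the scaled copies $a^lc_j$, which is what makes this proposition intrinsically more delicate than the classical construction of a Kronecker subset of a fixed interval; once this is handled via transcendence of $a$, the remainder of the construction follows the standard Cantor-type diagonalization template.
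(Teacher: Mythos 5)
Your construction is correct and is essentially a full execution of the route the paper only gestures at: the paper's proof consists of citing the Cantor-type construction of Kronecker sets in Rudin, Section 5.2, made ``sufficiently independent of the values of $a^l$,'' and your argument (refine a nested sequence of finite unions of intervals, perturb the centers to make the scaled points $a^lc_j$ linearly independent over $\Q$, apply Kronecker's simultaneous approximation theorem, propagate the estimate by uniform continuity, and diagonalize over triples $(L,f,\vep)$) is precisely how that citation is meant to be cashed out. The one substantive point you add is the transcendence of $a$, and you are right that it is genuinely needed rather than a convenience: if $a$ is algebraic of degree $d$ with integer minimal polynomial $\sum_{l=0}^{d}m_lX^l$, then $\sum_{l=1}^{d+1}m_{l-1}a^l c=0$ for every $c\neq 0$, so the finite set $\{a^lc\}_{l=1}^{d+1}$ is rationally dependent and $\bigcup_{l=1}^{d+1}a^lK$ can never be a Kronecker set; thus the proposition as literally stated for arbitrary $a<1$ fails (e.g.\ for $a=1/2$ and $L=2$), and the hypothesis you identify as implicit should really be stated explicitly, something the paper's one-line proof does not surface.
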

\begin{proof}  The properties that we want for $K$ can be arranged using the type of construction of Kronecker sets given in Rudin~\cite{Rudin2}, Section 5.2.  In fact, the most straightforward approach is to use the construction in Section 5.2 to build a compact, perfect, totally-disconnected set $K\subset J$ that is sufficiently independent of the values of $a^l, l=1,\dots,L$ so that the Kronecker property holds. 
\end{proof}

Now take a continuous probability measure $\mu$ supported on $K$ in Proposition~\ref{KRONECKER}.  Using the entropy method in Bourgain~\cite{Bourgain}, as in
Kostyukovsky and Olevskii\cite{KO}, we have the following.

\begin{prop}\label{BasicBAD}  For any sequence $l_m\to \infty$ as $m\to \infty$, the contractions $C_{a^{l_m}}\mu$ are an approximate identity for which pointwise convergence of $C_{a^{l_m}}\mu \ast f$ fails to hold on any $L^p[0,1], 1 \le p \le \infty$.  Indeed, there are characteristic functions $f$ for which the pointwise convergence fails.
\end{prop}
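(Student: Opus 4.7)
The approximate identity property should be routine: since each $C_{a^{l_m}}\mu$ is a probability measure supported in $[0,a^{l_m}]$ and $a^{l_m}\to 0$, continuity of translation gives $\|C_{a^{l_m}}\mu\ast f-f\|_p\to 0$ for $f\in L^p(\mathbb R)$, $1\le p<\infty$.

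For the failure of a.e.\ convergence, my plan is to argue by contradiction using Bourgain's entropy theorem. Suppose $C_{a^{l_m}}\mu\ast 1_A\to 1_A$ a.e.\ for every measurable $A\subset[0,1]$. Since simple functions are dense in $L^2[0,1]$ and linearity gives a.e.\ convergence on simple functions, the Banach principle of Sawyer would yield a weak type $(2,2)$ maximal inequality for $\mathcal M f=\sup_m|C_{a^{l_m}}\mu\ast f|$. Bourgain's entropy theorem, in the form used in \cite{Bourgain} and \cite{KO}, would then supply a constant $N=N(\epsilon)$ such that for every unit vector $f\in L^2[0,1]$ the $\epsilon$-covering number of the trajectory $\{C_{a^{l_m}}\mu\ast f:m\in\mathbb N\}$ in $L^2$ is at most $N$.

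The core of the argument is to violate this uniform entropy bound using Proposition~\ref{KRONECKER}. Given any $M\in\mathbb N$, I would first observe that the union $E_M=\bigcup_{m=1}^M a^{l_m}K$ is a closed subset of the Kronecker set $\bigcup_{l=1}^{l_M}a^lK$, hence itself a Kronecker set. For $M$ equally spaced targets $c_1,\dots,c_M\in S^1$, the Kronecker property supplies an integer $n$ with $|e^{2\pi i n x}-c_m|<1/(10M)$ for $x\in a^{l_m}K$ and $1\le m\le M$. Since $\mathrm{supp}(\mu)\subset K$, a change of variables gives $\int e^{2\pi i n a^{l_m} y}\,d\mu(y)\approx c_m$, and the standard calculation
\begin{equation*}
C_{a^{l_m}}\mu\ast e^{2\pi i n\cdot}(y)=\widehat\mu(na^{l_m})\,e^{2\pi i n y}
\end{equation*}
shows that the trajectory of the unit vector $f_n=e^{2\pi i n\cdot}\in L^2[0,1]$ consists of $M$ scaled copies of $f_n$ with multipliers $\widehat\mu(na^{l_m})\approx\overline{c_m}$. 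Because the $c_m$ are equally spaced on $S^1$, these $M$ trajectory points are pairwise $c$-separated in $L^2[0,1]$ for some $c>0$ independent of $M$, so the trajectory's $c/2$-entropy exceeds $\log M$. Taking $M$ large enough contradicts the uniform entropy bound from Bourgain's theorem.

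The main obstacle, as I see it, is not the entropy contradiction itself (which is clean once the Kronecker machinery is in hand) but the passage to explicit characteristic functions required by the last sentence of the proposition. The contradiction above already shows that a.e.\ convergence must fail somewhere in $L^2[0,1]$. Combined with the observation that a.e.\ convergence for every $1_A$ would, by linearity and the maximal inequality, extend to all of $L^2[0,1]$, this implies that some $1_A$ must already be a divergent example. To exhibit such an $A$ concretely one would follow \cite{KO}: replace the character $f_n$ by an appropriate level set of $\mathrm{Re}\,f_n$ and use positivity of the operators $C_{a^{l_m}}\mu$ to transfer the oscillatory behavior of the trajectory of $f_n$ to that of the resulting characteristic function.
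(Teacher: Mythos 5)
Your overall strategy---Bourgain's entropy criterion applied via the Kronecker property of $\bigcup_{l\le L}a^{l}K$---is exactly the route the paper takes (its proof is essentially a citation of \cite{Bourgain} and \cite{KO}), and your framing steps (approximate identity, the Banach principle reduction, the passage back to characteristic functions) are in order. But the core step, the violation of the uniform entropy bound, does not work as written. There are two problems. First, $M$ equally spaced points $c_1,\dots,c_M$ on $S^1$ are \emph{not} pairwise $c$-separated with $c$ independent of $M$: the minimal gap is $2\sin(\pi/M)\to 0$. Second, and more fundamentally, no single character can ever contradict Bourgain's theorem. Writing $e_n(y)=e^{2\pi i n y}$, the trajectory $\{C_{a^{l_m}}\mu\ast e_n\}_m=\{\widehat\mu(na^{l_m})\,e_n\}_m$ lies in the two-real-dimensional disk $\{z e_n:|z|\le 1\}$, whose $\delta$-covering number is at most $C/\delta^{2}$ uniformly in $n$, in $M$, and in the measure. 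So however you choose the targets $c_m$, the entropy of such a trajectory stays bounded and no contradiction is reached.

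The standard repair---and what \cite{Bourgain} and \cite{KO} actually do---is to use normalized sums of many characters. Fix $M$ and a $J\times M$ sign matrix $(\epsilon_{j,m})$, $\epsilon_{j,m}\in\{-1,+1\}$, whose columns are nearly orthogonal (random signs, or rows of a Hadamard-type matrix). For each $j$ the function equal to the constant $\epsilon_{j,m}$ on $a^{l_m}K$ is continuous and unimodular on the Kronecker set $\bigcup_{m\le M}a^{l_m}K$, so there is a frequency $n_j$ (and these can be taken distinct) with $\widehat\mu(n_j a^{l_m})\approx\epsilon_{j,m}$ for all $m\le M$, by the change of variables you already wrote down. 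Then $f=J^{-1/2}\sum_{j=1}^{J}e_{n_j}$ is a unit vector with
\begin{equation*}
\bigl\|C_{a^{l_{m_1}}}\mu\ast f-C_{a^{l_{m_2}}}\mu\ast f\bigr\|_2^{2}\;\approx\;\frac1J\sum_{j=1}^{J}\bigl|\epsilon_{j,m_1}-\epsilon_{j,m_2}\bigr|^{2}\;\approx\;2\quad(m_1\ne m_2),
\end{equation*}
so the trajectory contains $M$ points pairwise separated by roughly $\sqrt2$, and its $\delta$-entropy is at least $M$ for any fixed $\delta<1/2$. Letting $M\to\infty$ now genuinely contradicts the uniform entropy bound. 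Note that this argument only requires approximating $\pm1$-valued continuous functions on the unions by characters, which is precisely the weakening of the Kronecker condition highlighted in the remark following the proposition in the paper.
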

\begin{proof}  We have the measure $C_{a^{l_m}}\mu$ supported on disjoint subsets of $a^{l_m}K$.  Since any finite union of these sets are Kronecker sets, the entropy method of \cite{Bourgain} applies.
\end{proof}

\begin{rem}  The construction really does not use the full strength of the Kronecker condition.  Indeed, one only needs to approximate by characters any continuous function $\phi$ taking the values $\pm 1$ on $\bigcup\limits_{m=1}^M a^{l_m}K$.  Can we build compact totally-disconnected perfect sets $K$ that are not Kronecker sets, but have this more limited approximation property?  Note: it is no doubt clear, but it does no harm to point out that these unions are compact, totally-disconnected sets, so having functions that are continuous and take the values $\pm 1$ is not a contradiction as it would be on a domain like $[0,1]$.  
\qed
\end{rem}

\noindent {\bf Question}:  Can we extend the construction of Proposition~\ref{BasicBAD} so that there for all sequences $t_m\to 0$ the contractions $C_{t_m}\mu$ are not an a.e. pointwise good approximate identity, say on at least $L^\infty(\mathbb R)$?

\section{Various Questions}\label{QUESTIONS}

The results in this article suggest a number of questions concerning the behavior of approximate identities in general, and contractions of a fixed measure in particular.  we return to the types of questions first mentioned in the Introduction~\ref{intro}.  In short, to obtain a.e. convergence of convolutions by the approximate identity, what do we need to know about the measures and the functions?

Perhaps the most basic question is this

\begin{quest}\label{Q1}  For which continuous, non-Rajchman probability measures $\mu$ is there a sequence $C_{t_k}\mu$ of contractions such that for some Lebesgue space $L^p(\mathbb R)$, we have $C_{t_k}\mu \ast f \to f$ a.e. for all $f\in L^p(\mathbb R)$?
\end{quest}

\begin{rem}  If there is a good answer to Question~\ref{Q1}, it would seem to be likely to be about the analytical and/or combinatorial nature of the sets $\{k: |\widehat {\mu}(k)| \ge \delta\}$ as we vary $\delta > 0$.
\qed
\end{rem}

But more specifically, we might consider the following questions about Riesz products.

\begin{quest}\label{Q2}
	Suppose $\mu$ is a non-Rajchman Riesz product measure. Is there a $(t_k)$ so that 
	\begin{equation*}
\lim\limits_{k\to \infty} C_{t_k}\mu\ast f = f \mbox{ a.e. for all } f\in L^2(\mathbb R)?
	\end{equation*}
\end{quest}

We have seen that there are measures with with a sequence of contractions such that all subsequences are not behaving well pointwise a.e.  However, the construction is far from the Riesz product construction.  So this question is perhaps reasonable.

\begin{quest}\label{Q3}
	Is there a non-Rajchman  Riesz product measure so that 
	\begin{equation*}
		\lim\limits_{k\to \infty} C_{t_k}\mu \ast f = f \mbox{ a.e. for all } f\in L^2[0,1]?
	\end{equation*}
	for all sequences $(t_k)$ that grow faster than a certain fixed rate? 
\end{quest}

\begin{rem}  The construction that was given for a good Riesz product uses a rapidly increasing sequence $(t_k)$ with additional structural information.  So this question also seems to be a reasonable one.
\qed
\end{rem}

\begin{quest}\label{Q4}
	Is there an example of a sequence of contractions $C_{t_k}\mu$ of a non-Rajchman Riesz product measure $\mu$ where the maximal functions $\sup\limits_{k\ge 1} |C_{t_k}\mu\ast f|$ exhibit a strong $L^p(\mathbb R)$ bound for $1 < p < 2$, or even a weak $L^1(\mathbb R)$ bound? If so, what conditions must there be  on the $(t_k)$ and $\mu$?
\end{quest}

\begin{rem}  It seems unlikely that for a Riesz product construction, we can give specifics about which $L^p$-bounds one can have, for example where there is a strong $L^2$ bound but failure of pointwise convergence on $L^q$ for $q < 2$.  Nonetheless, this is not an unreasonable question.
\qed
\end{rem}

There is always the option of replacing the sequence of contractions $C_{t_n}\mu$ by an appropriate sequence of averages of these.  For example, we could take the Ces\`aro averages $\alpha_N = \frac 1N \sum\limits_{n=1}^N C_{t_n}\mu$.  The example we give of a measure and a sequence of contractions such that all subsequences are bad can be seen to be bad also for any sequence of averages along a subsequence of $C_{t_n}\mu$ also.  But in any case, we can reasonably ask for the conditions, say on a Riesz product construction, on $\mu$ such that the following holds:

\begin{quest}\label{Q5}  When can we have the averages $\alpha_N$ pointwise good on $L^2(\mathbb R)$, for example in the case that the underlying measure $\mu$ is a Riesz product measure? 
\end{quest} 

\section {\bf Appendix: Details about Convolution Operators}\label{details}

There are some technical issues to consider.  Most of what is outlined here is also carried through in Hewitt and Ross~\cite{HR1}, Section 20 (with an Addendum to Volume I in Hewitt and Ross~\cite{HR2}).  They handle the  issues for general locally compact groups, but we actually will focus on the very special classical case of $\mathbb T$ with the usual normalized Lebesgue measure.  Indeed, we will consider this as $[0,1]$ with addition module one, and the usual normalized Lebesgue measure $m$.  Sometimes it is convenient to actually not use addition modulo one, but consider the operators on $L^1(\mathbb R,m)$ with $m$ again being the usually Lebesgue measure now on $\mathbb R$.  But nonetheless, there are still some pitfalls.  It is interesting that one of them was an issue for Hewitt and Ross themselves, the pitfall of checking joint measurability before using theorems on multiple integrals.  This is the reason that they added the Addendum to Volume I in \cite{HR2}.

We take a positive, finite Borel measure $\mu$ on $[0,1]$.  We want to explain how this gives a bounded linear operator on $L^1[0,1]$ using the formula $\mu\ast f(x) = \int f(x-y)\, d\mu(y)$, for $f\in L^1([0,1],m)$.  We want this to apply in particular to singular measures $\mu$.  The difficulty is that we need to make sense of this formula given that $f$is actually only an equivalence class of functions, even though by tradition the notation suggests it is just a single function.  We cannot just substitute for $f$ in the formula some $g=f$ a.e.   Indeed, if $\mu$ is supported on a Lebesgue null set $E$, there are functions $g=f$ a.e. such that $g(-y)$ restricted to $E$ is not a Borel measurable function.  Hence, this formula would not make sense for $x=0$.  In fact, it could be arranged that for any function $h$ on $E$, there is $g=f$ a.e. such that $g(-y)=h(y)$ on $E$.  

For notation, we let $\mathcal B_k$ be the Borel measurable sets in $[0,1]^k$.     We also let $\mathcal L_k$ be the Lebesgue measurable sets in $[0,1]^k$.  Here $k=1,2$.  As usual, we say that a function $g$ is $\Sigma$-measurable, with respect to some $\sigma$-algebra $\Sigma$, when $f^{-1}(U) \in \Sigma$ for all open sets $U$.  

We first consider functions $f \ge 0$ that are $\mathcal B_1$-measurable.  As usual, this means that $f^{-1}(U) \in \mathcal B_1$ for all open sets $U$.  We define $T_x$ by $T_x(y) 
=x - y$ for all $y$.

\begin{prop}\label{meas} Given $f\ge 0$ that is $\mathcal B_1$-measurable, for all $x$, the function $F(y) = f(x-y) = f\circ T(y)$ is $\mathcal B_1$-measurable too.   So the integral $\int f(x-y) \, d\mu(y) $ is well defined in $[0,\infty]$.  
\end{prop}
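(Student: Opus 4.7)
The plan is straightforward: reduce the claim to the fact that Borel measurability is preserved under composition with continuous maps, and then invoke the standard definition of the integral of a nonnegative Borel function against a finite Borel measure.

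First I would note that, in either formulation (addition mod $1$ on $[0,1]$, or on $\mathbb{R}$), the map $T_x : y \mapsto x - y$ is continuous. Hence $T_x$ is Borel measurable: $T_x^{-1}(B) \in \mathcal{B}_1$ for every $B \in \mathcal{B}_1$, because the preimage of every open set under a continuous map is open and the open sets generate $\mathcal{B}_1$. (Equivalently, $T_x$ is its own continuous inverse, as $T_x \circ T_x = \mathrm{id}$, which makes this immediate.)

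Next, for any open $U \subset \mathbb{R}$, one has
\begin{equation*}
F^{-1}(U) = (f \circ T_x)^{-1}(U) = T_x^{-1}\bigl(f^{-1}(U)\bigr).
\end{equation*}
Since $f$ is assumed to be $\mathcal{B}_1$-measurable, $f^{-1}(U) \in \mathcal{B}_1$, and then the previous step yields $T_x^{-1}(f^{-1}(U)) \in \mathcal{B}_1$. Thus $F = f \circ T_x$ is $\mathcal{B}_1$-measurable for each fixed $x$.

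Finally, because $F \ge 0$ is Borel measurable and $\mu$ is a positive, finite Borel measure on $[0,1]$, the Lebesgue integral $\int F\, d\mu = \int f(x-y)\, d\mu(y)$ is well defined as an element of $[0,\infty]$ by the standard construction (via simple-function approximation and the monotone convergence theorem). There is no real obstacle here; the entire content is the observation that the issue flagged in the preceding discussion — that a mere $m$-a.e.\ representative of $f$ need not be Borel — is avoided as soon as one fixes a genuinely $\mathcal{B}_1$-measurable representative, since Borel measurability, unlike Lebesgue measurability, transports cleanly under the continuous map $T_x$.
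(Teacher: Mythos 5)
Your proof is correct and follows essentially the same route as the paper: both reduce the claim to the fact that the continuous map $T_x$ pulls Borel sets back to Borel sets (the paper spells this out via the $\sigma$-algebra of ``good sets,'' which is exactly what your appeal to the open sets generating $\mathcal{B}_1$ amounts to), and then conclude via $F^{-1}(U) = T_x^{-1}(f^{-1}(U))$ and the standard integral of a nonnegative Borel function. No gaps.
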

\begin{proof}
Now for any open set $U$, we have $(f\circ T)^{-1}(U) = T^{-1}(f^{-1}(U))$.  Hence, what we need to show is that if $B\in \mathcal B_1$ and $T$ is continuous, then $T^{-1}(B)$ is in $\mathcal B_1$.  We would then apply this to $B = f^{-1}(U)$.  But it is easy to check that the class of sets $\mathcal B$ consisting of sets $B$ such that $T^{-1}(B)$ is in $\mathcal B_1$, is actually a $\sigma$-algebra.   Since $T$ is continuous, $\mathcal B$ contains all open sets.  Hence,  $\mathcal B_1$ is a subset of $\mathcal B$.  That is, for all Borel sets $B$, the set $T^{-1}(B)$ is a Borel set, which is what we wanted to show.   
\end{proof}

But we want more, actually to show how the convolution $\mu\ast f$ gives a bounded operator on $L^1[0,1]$.  We can take the viewpoint used in Hewitt and Ross~\cite{HR1} that the equivalence class for $f\in L^1[0,1]$ is just represented by a $\mathcal B_1$-measurable function in that equivalence class.  But we can also use the extension of $\mu$ to include $\mu$-null sets and resolve problems with definitions that way, as we will see.

In any case, we have to consider joint measurability of functions of two variables in order to make sense of iterated integrals like $\int \int f(x-y) \, d\mu(y)\, dm(x)$.  We first see how

\begin{prop}\label{jtmeas} Suppose $f\ge 0$ is $\mathcal B_1$-measurable.  Then the positive function $G(x,y) = f(x - y)$ is measurable with respect to the product $\sigma$-algebra $\mathcal P$ for the product measure  $\mu \times m$.  
\end{prop}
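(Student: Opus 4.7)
The plan is to realize $G(x,y) = f(x-y)$ as the composition $f \circ T$, where $T\colon [0,1]^2 \to [0,1]$ is given by $T(x,y) = x - y$ (interpreted using addition modulo one, in keeping with the convention fixed at the start of the appendix). The whole proof then reduces to transferring the scalar Borel measurability of $f$ to joint measurability on the product space via the continuity of $T$. I expect this is far less delicate than the pitfall Proposition~\ref{meas} guards against: the real content is topological rather than measure-theoretic.

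The first step is to observe that $T$ is continuous, and therefore $\mathcal B_2$-to-$\mathcal B_1$ measurable. Composed with the $\mathcal B_1$-measurable $f$, this gives that $G = f\circ T$ is $\mathcal B_2$-measurable on $[0,1]^2$. The one remaining point is to verify the inclusion $\mathcal B_2 \subseteq \mathcal B_1 \otimes \mathcal B_1 \subseteq \mathcal P$. For this, I would use second countability of $[0,1]$: fix a countable base $\{U_n\}$ for its topology, so that $\{U_n \times U_m : n,m \in \N\}$ is a countable base for the product topology on $[0,1]^2$. Each $U_n \times U_m$ is a measurable rectangle and hence lies in $\mathcal B_1 \otimes \mathcal B_1$; every open set in $[0,1]^2$ is a countable union of such rectangles and so lies in $\mathcal B_1 \otimes \mathcal B_1$ as well. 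Since $\mathcal B_1 \otimes \mathcal B_1$ is a $\sigma$-algebra containing all open sets, it contains $\mathcal B_2$. The product $\sigma$-algebra $\mathcal P$, whether taken as $\mathcal B_1 \otimes \mathcal B_1$ itself or as its $\mu \times m$-completion, contains $\mathcal B_1 \otimes \mathcal B_1$, so $G$ is $\mathcal P$-measurable.

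The main thing to flag, since this is precisely the issue Hewitt and Ross had to revisit, is that one cannot shortcut this argument by invoking Proposition~\ref{meas} slicewise: separate Borel measurability in each variable does not in general imply joint measurability, which is exactly what is required before one is licensed to apply Fubini-type theorems to $\int\!\int f(x-y)\,d\mu(y)\,dm(x)$. It is the continuity of $T$, together with the second countability of $[0,1]$, that converts the slicewise fact supplied by Proposition~\ref{meas} into genuine measurability with respect to the product $\sigma$-algebra.
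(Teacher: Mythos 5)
Your proposal is correct and follows essentially the same route as the paper: factor $G = f\circ T$ through the continuous map $T(x,y)=x-y$ to get $\mathcal B_2$-measurability, then verify $\mathcal B_2 \subseteq \mathcal B_1\otimes\mathcal B_1 \subseteq \mathcal P$ by writing each open set of $[0,1]^2$ as a countable union of open rectangles. Your explicit appeal to second countability just makes precise the paper's remark that ``any open set is a countable union of rectangles $U\times V$,'' and your closing caution about slicewise measurability is a sound aside that matches the motivation given at the start of the appendix.
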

\begin{proof}  Here $\mathcal P$ is the product $\sigma$-algebra $\mathcal B_1 \times \mathcal L_1$.  An argument just like the one in Proposition~\ref{meas} shows that $G:[0,1]^2 \to \mathbb R$ is $\mathcal B_2$-measurable.  So we need to show that $\mathcal B_2 \subset  \mathcal P$.    This is clear if we can show that the product $\sigma$-algebra $\mathcal S$ generated rectangles $A\times B$, where $A,B \in \mathcal B_1$, contains $\mathcal B_2$.  Then we use the obvious fact that $\mathcal S \subset \mathcal P$.  

But all open sets are in $\mathcal S$, because any open set is a countable union of rectangles $U\times V$ where $U,V$ are open sets.  So $\mathcal B_2$ being the smallest $\sigma$-algebra containing open sets tells us that $\mathcal B_2 \subset \mathcal S$. 
\end{proof}

\begin{rem}
In fact,  $\mathcal S = \mathcal B_2$.  Given the detail in the proof of Proposition~\ref{jtmeas}, what we need to show is that $\mathcal S \subset \mathcal B_2$.  Since $\mathcal S$ is the smallest $\sigma$-algebra containing sets $A\times B$ where $A,B\in \mathcal B_1$, it suffices to shows that if $A,B$ are in $\mathcal B_1$, then $A\times B$ is in $\mathcal B_2$.  But $A\times B = A\times [0,1]\cap [0,1]\times B$.  Moreover, the class $\{A: A\times [0,1]\in\mathcal B_2\}$ contains the open sets and is a $\sigma$-algebra.  The same is true for $\{B:[0,1]\times B\in\mathcal B_2\}$.  So both of these $\sigma$-algebras contain $\mathcal B_1$ and hence $A\times [0,1]$ and $[0,1]\times B\in \mathcal B_2$ for any $A,B \in \mathcal B_1$.  This type of result can be proved in great generality.  For example, see Bogachev~\cite{BOG2}, Lemma 6.4.1 and Lemma 6.4.2.  
\qed
\end{rem}

Now we can apply the Fubini-Tonelli Theorem.   

\begin{prop}\label{Convolve} Given a positive, Borel probability measure $\mu$ on $[0,1]$, and a $\mathcal B_1$-measurable, $m$-integrable function $f$, $O(f) = f\to \mu\ast f$ is a well defined real number for a.e. $x [m]$, and $O(f)$ is $m$-integrable with
$\int |O(f)| \, dm = \int |\int f(x-y)|\, d\mu(y)|\, dm(x) \le \int |f|\, dm$.
\end{prop}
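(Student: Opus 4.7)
The plan is to invoke the Fubini--Tonelli theorem on the product measure $\mu \times m$, using the joint measurability already established in Proposition~\ref{jtmeas} together with the translation invariance of $m$. To keep matters clean I would first split $f = f_+ - f_-$ into nonnegative parts; each remains $\mathcal B_1$-measurable and $m$-integrable. By Proposition~\ref{jtmeas}, the function $(x,y) \mapsto f_\pm(x-y)$ is $\mathcal B_1 \times \mathcal L_1$-measurable, which is precisely the hypothesis needed to apply Tonelli to the nonnegative integrand $|f(x-y)|$ on the product space.

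The key step is the computation
\[
\int_{[0,1]} \int_{[0,1]} |f(x-y)| \, d\mu(y) \, dm(x) \;=\; \int_{[0,1]} \int_{[0,1]} |f(x-y)| \, dm(x) \, d\mu(y),
\]
where the equality is Tonelli. For each fixed $y$, translation invariance of $m$ (with addition mod $1$ on $[0,1]$) gives $\int |f(x-y)| \, dm(x) = \|f\|_1$, so the right-hand side collapses to $\|f\|_1 \cdot \mu([0,1]) = \|f\|_1$. In particular this iterated integral is finite, which forces $\int |f(x-y)| \, d\mu(y) < \infty$ for $m$-a.e.\ $x$; this is exactly the assertion that $O(f)(x) = \int f(x-y) \, d\mu(y)$ is a well-defined real number for a.e.\ $x$. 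Fubini (now applied to the signed integrand, which we have just shown is absolutely integrable on $[0,1]\times[0,1]$ with respect to $\mu\times m$) then yields
\[
\int |O(f)| \, dm \;\le\; \int\!\!\int |f(x-y)| \, d\mu(y) \, dm(x) \;=\; \|f\|_1,
\]
which is the stated inequality.

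The only delicate point, and what I would flag as the main obstacle if one were working from scratch, is making sure every invocation of Tonelli/Fubini is legitimate: this uses the joint measurability from Proposition~\ref{jtmeas} (which is why that proposition was proved in the first place), and it uses that translation invariance is being applied to a genuine $\mathcal B_1$-measurable representative of $f$ so that the translated function $x \mapsto f(x-y)$ is itself $\mathcal B_1$-measurable and no null-set pathology of the type flagged in the opening of the appendix can arise. Once those two ingredients are confirmed, every remaining step is a routine application of the standard measure-theoretic machinery.
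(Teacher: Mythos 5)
Your argument is correct and follows essentially the same route as the paper's own proof: joint measurability from Proposition~\ref{jtmeas}, Tonelli applied to the nonnegative integrand using translation invariance of $m$ to evaluate the inner integral as $\|f\|_1$, and then Fubini for the signed case. The only cosmetic difference is that you decompose $f = f_+ - f_-$ whereas the paper reduces to $|f|$ directly, which is an equivalent step.
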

\begin{proof} Assume first that $f\ge 0$ is $\mathcal B_1$-measurable and $\int f\, dm < \infty$.   The measurability result Proposition~\ref{jtmeas} shows that we can use the Tonelli Theorem to compute 
\[\int \int f(x-y) \, d\mu(y)\,dm(x) =\int \int f(x-y) \, dm(x)\,d\mu(y) = \mu[0,1]\int f\, dm.\]
Hence, $\int \int f(x-y) \, d\mu(y)\,dm(x)$ is finite.  In addition, for a.e. $x [m]$, 
$O(f)(x) =\mu\ast f(x) =  \int f(x-y) \, d\mu(y)$ is finite and $\int O(f) dm = \mu[0,1]\int f \, dm$. 

But then if we allow $f$ to be $\mathcal B_1$-measurable and $\int |f|\, dm < \infty$, we can apply the above to $|f|$.  Then again the measurability result in Proposition~\ref{jtmeas} and the Fubini-Tonelli Theorem gives this proposition.
\end{proof}

Proposition~\ref{Convolve} says that we can define $O(f)$ for $f\in L^[0,1]$ as follows.

 \begin{defn}\label{OI} Take any Borel measurable function $g$ in the equivalence class for $f\in L^[0,1]$ and let $O(f)$ be the equivalence class in $L^[0,1]$ of  $O(g)$ with $O(g) = \int f(x-my)\, d\mu(y)$ as defined above.  
 \qed
 \end{defn}
 
\noindent The new operator $O$ is a well-defined, bounded linear operator on $L^1[0,1]$.   Indeed, if $g_1,g_2$ are both Borel measurable in the equivalence class of $f$, then $\int |O(g_1) - O(g_2)|\, dm = \int |O(g_1-g_2)|\, dm \le \int |g_1 - g_2|\, dm = 0$.  Also, $\|O(f)\|_1 \le \|f\|_1$, so $O$ is a contraction operator.

But we can also use completion of measures to define $O$ on $L^1[0,1]$.  Given a positive, finite  measure $\nu$ on a $\sigma$-algebra $\mathcal M$, we denote its completion by $\nu^*$.  This is the same as $\nu$ but extended to the $\sigma$-algebra given by adjoining all subsets of $\nu$-null sets to $\mathcal M$.  It is not enough to use just $\mu^*$, we actually need to also use $(\mu^*\times m)^*$,the completion of the product measure of the two complete measures $\mu^*$ and $m$.

\begin{prop}\label{complete} Let $f$ be a $\mathcal L_1$-measurable, $m$-integrable function.  Then
$G(x,y) = f(x-y)$ is $(\mu^*\times m)^*$-measurable.  Moreover, for a.e. $x [m]$, the function $y\to f(x-y)$ is $\mu^*$-measurable, and $O(f) = \mu^*\ast f$ is well-defined and finite for a.e. $x$ with respect to  $m$.  In addition, $\|O(f)\|_1 \le \|f\|_1$.
\end{prop}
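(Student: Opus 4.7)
The plan is to reduce to the Borel case already handled in Propositions~\ref{meas}, \ref{jtmeas}, and \ref{Convolve}, by splitting $f$ into a Borel part and an $m$-null part, and then showing the $m$-null part produces a function of $(x,y)$ that vanishes off a $(\mu\times m)$-null set (hence is negligible after completion).

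First, I would invoke the standard fact that any $\mathcal L_1$-measurable function $f$ can be written as $f = g + h$, where $g$ is $\mathcal B_1$-measurable and $h = 0$ except on an $m$-null set $N$. Proposition~\ref{jtmeas} applied to $|g|$ together with Proposition~\ref{Convolve} handles the Borel summand: $G_g(x,y) = g(x-y)$ is measurable with respect to the product $\sigma$-algebra $\mathcal P = \mathcal B_1 \times \mathcal L_1$ (and a fortiori with respect to $(\mu^*\times m)^*$), and the stated conclusions already hold for $g$ with $\|O(g)\|_1 \le \|g\|_1 = \|f\|_1$.

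The substantive step is to handle $h$. Fix a Borel null set $\tilde N \supset N$, and consider $E = \{(x,y)\in[0,1]^2 : x-y \in \tilde N\}$. Since the map $(x,y)\mapsto x-y$ is continuous, $E$ is in $\mathcal B_2 \subset \mathcal P$, and by Tonelli applied with $m$ in the inner integral,
\begin{equation*}
(\mu\times m)(E) \;=\; \int\!\!\int \1_{\tilde N}(x-y)\,dm(x)\,d\mu(y) \;=\; \int m(\tilde N + y)\,d\mu(y) \;=\; 0,
\end{equation*}
using translation invariance of Lebesgue measure. Since $\{(x,y): h(x-y)\neq 0\}\subset E$, the function $H(x,y)=h(x-y)$ vanishes $(\mu\times m)^*$-a.e.\ and is therefore $(\mu\times m)^*$-measurable. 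Using the classical identity $(\mu^*\times m)^* = (\mu\times m)^*$ (see, e.g., Bogachev), $H$ is $(\mu^*\times m)^*$-measurable, and consequently so is $G = G_g + H$.

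Finally, I apply the completed Fubini--Tonelli theorem to $|G|$, which is now known to be jointly measurable with respect to $(\mu^*\times m)^*$. This yields, for $m$-a.e.\ $x$, that $y\mapsto f(x-y)$ is $\mu^*$-measurable and that
\begin{equation*}
\int\!\!\int |f(x-y)|\,d\mu^*(y)\,dm(x) \;=\; \int\!\!\int |f(x-y)|\,dm(x)\,d\mu^*(y) \;=\; \mu^*[0,1]\,\|f\|_1,
\end{equation*}
again by translation invariance of $m$. In particular $O(f)(x) = \int f(x-y)\,d\mu^*(y)$ is finite for $m$-a.e.\ $x$, and $\|O(f)\|_1 \le \|f\|_1$. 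The only nontrivial obstacle is the measure-theoretic bookkeeping around the completed product measure — specifically, ensuring $(\mu^*\times m)^* = (\mu\times m)^*$ and invoking the correct version of Fubini for completed products — but once that is in place, the argument reduces cleanly to translation invariance of Lebesgue measure, which kills the ambiguity coming from choosing a representative of the equivalence class of $f$.
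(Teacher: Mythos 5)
Your proof is correct and follows essentially the same route as the paper: reduce to the Borel case by modifying $f$ on an $m$-null set, observe that $\{(x,y): x-y\in \tilde N\}$ is $(\mu\times m)$-null by Tonelli and translation invariance of $m$, and then invoke the Fubini--Tonelli theorem for completed product measures. The paper uses a sandwich $g_1 \le f \le g_2$ with $g_1 = g_2$ $m$-a.e.\ rather than your additive splitting $f = g + h$, and it leaves the key null-set computation implicit, but these are cosmetic differences; your version actually spells out the one step the paper glosses over.
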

\begin{proof}  We first use the above calculations  for $\mathcal B_1$-measurable functions.  Given $f$ which is $\mathcal L_1$-measurable and $m$-integrable, there are $\mathcal B_1$-measurable functions $g_1,g_2$ with $g_1 \le g \le g_2$ and $g_1 = g_2 = g$ a.e with respect to $m$.  We would have to allow the Borel measurable functions here to take the values $\infty$ or $-\infty$ on $m$-null sets.  Now 
we see that $O(g_i)$ are both well-defined, up to allowing for infinity values on $m$-null sets, and $O(g_1) =O(g_2)$.  But then $f$ must be measurable and integrable with respect to $(\mu^*\times m)^*$.  Then we apply the Fubini-Tonelli Theorem for complete measures and the completion of the product measure as in Theorem 8.12 in Rudin~\cite{Rudin1}.  This gives all the results in the proposition.
\end{proof}

\begin{rem} Of course, the two definitions of $O$ above give the same operator.  The only difference is the approach with completion of measures allows us to give meaning to $\int f(x-y)\, d\mu(y)$ for a.e. $x [m]$, by replacing $\mu$ by $\mu^*$.  This of course does not change the value of this integral if $f$ is $\mathcal B_1$-measurable, at least it does not for a.e. $x [m]$.
\qed
\end{rem}

\bibliographystyle{amsplain}

\end{document}